\documentclass[11pt]{amsart}
\usepackage[utf8]{inputenc}
\usepackage{amsmath,amssymb}
\usepackage{wrapfig}
\usepackage{url}
\usepackage{mathtools}
\usepackage{graphicx}
\usepackage{stmaryrd}
\usepackage{amsthm}
\usepackage{xcolor}
\usepackage[colorlinks=true,linkcolor=blue,citecolor=blue]{hyperref}
\usepackage[shortlabels]{enumitem}
\usepackage{comment}
\usepackage{relsize}
\usepackage{dsfont}
\usepackage{stmaryrd}
\usepackage{geometry}
\usepackage{setspace}
\usepackage{mathrsfs} 
\usepackage{orcidlink}
\usepackage{slashed}
\usepackage{yhmath}
%%%%%%%%%%%%%%%%%%%%%%%%%%%%%%%%%%%%%%%%%%%%%%%%%%%%%%
\raggedbottom

\makeatletter
\geometry{a4paper, left=2.54cm, right=2.54cm, bottom=2.54cm, top=2.54cm}
% space between lines
\setlength{\parskip}{0.5\baselineskip}
\newtheorem{theorem}{Theorem}
\numberwithin{theorem}{section} 
\numberwithin{equation}{section}
\newtheorem{corollary}[theorem]{Corollary}

\newtheorem{prop}[theorem]{Proposition} 
\theoremstyle{definition}
\newtheorem{example}{Example}
\numberwithin{example}{section}

\newtheorem{lemma}[theorem]{Lemma}
\theoremstyle{definition}
\newtheorem{obs}{Remark}
\theoremstyle{definition}

\newcommand{\R}{\mathbb{R}}
\newcommand{\Z}{\mathbb{Z}}
\newcommand{\N}{\mathbb{N}}
\newcommand{\T}{\mathbb{T}}
\newcommand{\C}{\mathbb{C}}
\newcommand{\Tr}{\operatorname{Tr}}
\newcommand{\HS}{\operatorname{HS}}
\setlength\parindent{24pt}
\title{Global hypoellipticity of systems of Fourier multipliers on compact Lie groups}

\author[A. Kowacs]{André Pedroso Kowacs} %\orcidlink{https://orcid.org/0000-0001-8784-3147}}
\address{
%André Pedroso Kowacs:
  %\endgraf
  Department of Mathematics
  \endgraf
 Universidade Federal do Paraná (UFPR), Brazil
  %\endgraf
  %{\it E-mail address} {\rm andrekowacs@gmail.com}
  }
\email{andrekowacs@gmail.com}

%---------- Subjclass
\subjclass{Primary: 22E30, 43A77. Secondary: 58J40}

%---------- Keywords
\keywords{Global hypoellipticity, Compact Lie group, Systems, Fourier series, Fourier multiplier}

\date{\today}

\begin{document}

\begin{abstract}
We apply the characterization of global hypoellipticity for $G$-invariant operators on homogeneous vector bundles obtained by Cardona and Kowacs [J. Pseudo-Differ. Oper. Appl. 16, 23 (2025)] to obtain a necessary and sufficient condition for an arbitrary system of left-invariant operators on a compact Lie group to be globally hypoelliptic, providing a full proof independent of the bundle structure of that paper. We then prove alternative sufficient conditions for globally hypoellipticity for a large class of particular cases of systems making use of lower bounds for the smallest singular value of complex matrices.
\end{abstract}
\maketitle
\allowdisplaybreaks

\section{Introduction}

Global properties of (pseudo)differential operators on smooth compact manifolds have been intensively studied in the last few years. One of these properties is known as {\it global hypoellipticity}, which is characterized by the fact that an operator $P$ which acts on distributions is globally hypoelliptic if it only maps distributions given by smooth functions into smooth functions, that is, if $Pu$ is smooth, then $u$ is smooth. This property has been extensively studied in many different settings, such as in the torus (see for instance  \cite{BergamascoKirilovGH,PsudoGHKirilov,PseudoTorus,GF1,GF2,GFremarks}), other compact Lie groups (see for instance  \cite{Araujo,KirilovTS,KirilovCOMP,RuzTurWirth}) and also in general compact manifolds (see for instance \cite{bergack,hounie,houniezugli,Kirilov_2020}). It is worth mentioning the existence of variants of this property which replace the smoothness in the definition by other types of regularity, such analytic or Gevrey.

The study of global hypoellipticity can also be extended to system of operators. In this case the definition is analogous: a system of operators is globally hypoelliptic if whenever a vector of smooth functions admits a vector of solutions for such a system, then this vector of solutions must be smooth. The problem of finding necessary and sufficient conditions for global hypoellipticity of systems has attracted much attention in the last few decades, in part due to its connection with global hypoellipticity involving differential forms,
%and studied primarily on the torus
and general compact manifolds (as in \cite{BergamascoGH,triangulariz,overdetermined}). More recently, by making use of the Fourier analysis developed in \cite{Ruz}, there has been some development in obtaining precise results for some particular systems on compact Lie groups also (see \cite{diag-systems}).

In this paper we obtain necessary and sufficient conditions for the globally hypoellipticity of an arbitrary system of Fourier multiplier (left-invariant continuous linear operators) acting on smooth functions of a compact Lie group. This is achieved by considering the system of operators as a single operator acting between smooth vector-valued functions and using the Fourier analysis for vector-valued functions on compact Lie groups developed in \cite{HomoVector}.

This paper is structured as follows. In Section \ref{sec2} we recall the main theory and introduce the basic notation used throughout the paper. In Section \ref{sec3} we present our main results and their proofs, along with some illustrative examples.

\section{Preliminaries}\label{sec2}

\subsection{Fourier analysis on compact Lie groups}
For a more detailed exposition of the Fourier analysis on compact Lie groups, see \cite{HomoVector} and \cite{Ruz}. Let $G$ be a compact Lie group and denote by $\widehat{G}$ its unitary dual. This set consists of equivalence classes of all continuous unitary representations of $G$, where two representations $\xi,\eta$ are equivalent if there exists a linear bijection $A$ such that $\eta(g)A=A\xi(g)$. 

Since $G$ is compact, the representations in each class $[\xi]\in\widehat{G}$ are finite dimensional, and we denote their common dimension by $d_\xi$. From now on we always fix one a representative matrix-valued representative of each $[\xi]\in\widehat{G}$ . It follows from the Peter-Weyl Theorem that the collection of all coefficient functions of elements of $\widehat{G}$ is an orthogonal Schauder basis for $L^2(G)$, where integration is taken with respect to the Haar measure in $G$. The matrix-valued Fourier coefficients of $f\in L^2(G)$ as defined in \cite{Ruz} are given by 
\begin{equation*}
    \widehat{f}(\xi)\doteq \int_G f(x)\xi(x)^*dx\in\mathbb{C}^{d_\xi\times d_\xi}, 
\end{equation*}
for every $ [\xi]\in\widehat{G}$.
This gives the Fourier inversion formula as
\begin{equation*}
    f(x)=\sum_{[\xi]\in\widehat{G}} d_\xi\Tr\left(\xi(x)\widehat{f}(\xi)\right),
\end{equation*}
for almost every $x$ in $G$. There is also an analogue of Plancherel's Theorem, namely
\begin{equation*}
    \|f\|_{L^2(G)}=\left(\int_G|f(x)|^2dx\right)^{\frac{1}{2}}=\left(\sum_{[\xi]\in \widehat{G}}d_\xi\|\widehat{f}(\xi)\|_{\HS}^2\right)^{\frac{1}{2}},
\end{equation*}
for any $f\in L^2(G)$, where $\|A\|_{\HS}$ denotes the Hilbert-Schmidt norm of a matrix $A$, given by $\sqrt{\Tr(A^*A)}=\sqrt{\sum_{i,j}|A_{ij}|^2}$.

The Fourier transform extends to the set $\mathcal{D}'(G)$ of distributions on $G$, via the formula
\begin{equation*}
    \widehat{u}(\xi)\doteq\langle u,\xi^*\rangle\in\mathbb{C}^{d_\xi\times d_\xi}, 
\end{equation*}
for every $u\in\mathcal{D}'(G) \text{ and } [\xi]\in\widehat{G}$, where the evaluation should be understood coefficient-wise.
Given a continuous linear operator $D:C^\infty(G)\to C^\infty(G)$, define its matrix-valued global symbol $\sigma_D$ by
\begin{equation*}
    \sigma_D(x,\xi)\doteq \xi(x)^*D\xi(x),
\end{equation*}
for all $(x,[\xi])\in G\times\widehat{G}$. In \cite{Ruz} the authors proved the quantisation formula
\begin{equation}\label{quantisation} Df(x)=\sum_{[\xi]\in\widehat{G}}d_\xi\Tr\left[\xi(x)\sigma_D(x,\xi)\widehat{f}(\xi)\right]
\end{equation}
for all $x\in G$, $f\in C^\infty(G)$. One can prove that the symbol $\sigma_D$ in the formula above does not depend on $x\in G$ if and only if $D$ is a left-invariant operator, that is, it commutes with  left translations under the group action on itself. In this case we write $\sigma_D(x,\xi)=\sigma_D(\xi)$ and consequently $\widehat{Df}(\xi)=\sigma_D(\xi)\widehat{f}(\xi)$, for every $[\xi]\in\widehat{G}$ and we call it a Fourier multiplier.

Next, let $n\in\N$ and consider the canonical basis $\{e_1,\dots e_n\}$ in $\mathbb{C}^n$, viewed as a $\C$-vector space. Given a mapping $f:G\to \C^n$, we denote its components by $f_i:G\to \mathbb{C}$, 
 where $f_i(x)=\langle f(x),e_{i}\rangle_{\C^n}\in\mathbb{C}$, for each $x\in G$, $1\leq i\leq n$. Following \cite{HomoVector}, for $f\in L^2(G,\C^n)$ we define the Fourier coefficients of $f$ by 
 \begin{equation*}
     \widehat{f}(\xi)\doteq\begin{pmatrix}
         \widehat{f_1}(\xi)\\\vdots\\ \widehat{f_n}(\xi)
     \end{pmatrix},
 \end{equation*}
 for every $[\xi]\in\widehat{G}$, that is, $\widehat{f}(\xi)$ is a $n\times 1$ block column matrix with $d_\xi\times d_\xi$ blocks. This definition also extends to the set of distributions $\mathcal{D}'(G,\C^n)$ by  
 \begin{equation*}
  \widehat{u}(\xi)\doteq \begin{pmatrix}
      \widehat{u_1}(\xi)\\
      \vdots \\
      \widehat{u_n}(\xi)
  \end{pmatrix}\in (\mathbb{C}^{d_\xi\times d_\xi})^{n\times 1},
 \end{equation*}
 where 
 \begin{align*}
     \widehat{u_i}(\xi)_{\alpha\beta}\doteq\langle u,\overline{\xi_{\beta\alpha}}\otimes e_i\rangle
 \end{align*}
  and $\overline{\xi_{\beta\alpha}}\otimes e_i\in C^\infty(G,\C^n)$ is given by $\overline{\xi_{\beta\alpha}}\otimes e_i(x) = \overline{\xi_{\beta\alpha}}(x)e_i$, for $1\leq \alpha,\beta\leq d_\xi$ and  $1\leq i\leq n$. 
  
  We can characterize smooth functions and distributions by the decay of their Fourier coefficients, as in the scalar-valued case, as follows.
\begin{prop}\label{prop_fourier_smooth}
    Given a sequence of block column matrices $\widehat{v}(\xi)\in(\mathbb{C}^{d_\xi\times d_\xi})^{n\times 1}$, $[\xi]\in\widehat{G}$, they correspond to the Fourier coefficients of a function in $C^\infty(G,\C^n)$ if and only if for every $N>0$, there exists $C_N>0$ such that 
    \begin{equation*}
        \|\widehat{v}(\xi)\|_{\HS}\leq C_N\langle \xi\rangle^{-N},
    \end{equation*}
     for every $[\xi]\in\widehat{G}$.
\end{prop}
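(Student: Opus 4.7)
The plan is to reduce the statement to the scalar-valued analogue, which is the classical characterization of smooth functions and distributions on a compact Lie group via rapid decay of their matrix-valued Fourier coefficients (proved for instance in \cite{Ruz}). The key structural observation is that a vector-valued function $v:G\to\C^n$ is smooth if and only if each of its components $v_i:G\to\C$ is smooth, and that by the block definition used above, the $i$-th block of $\widehat{v}(\xi)$ is exactly the scalar Fourier coefficient $\widehat{v_i}(\xi)$ when $v_i$ is viewed as a scalar distribution on $G$. A short check using the pairing defining $\widehat{u_i}(\xi)_{\alpha\beta} = \langle u, \overline{\xi_{\beta\alpha}}\otimes e_i\rangle$ confirms this identification.

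The norm bookkeeping is immediate: by the definition of the Hilbert-Schmidt norm of a block column matrix,
\[
\|\widehat{v}(\xi)\|_{\HS}^2 \;=\; \sum_{i=1}^{n}\|\widehat{v_i}(\xi)\|_{\HS}^2,
\]
which gives the two-sided bound $\max_i\|\widehat{v_i}(\xi)\|_{\HS}\leq \|\widehat{v}(\xi)\|_{\HS}\leq \sqrt{n}\max_i\|\widehat{v_i}(\xi)\|_{\HS}$. Hence the rapid decay of $\|\widehat{v}(\xi)\|_{\HS}$ in $\langle\xi\rangle$ is equivalent to the simultaneous rapid decay of the scalar norms $\|\widehat{v_i}(\xi)\|_{\HS}$ for all $i=1,\dots,n$.

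With this in hand both implications follow immediately from the scalar version. For the necessity, if $v\in C^\infty(G,\C^n)$ then each $v_i\in C^\infty(G)$, so the scalar result yields constants $C_{N,i}$ with $\|\widehat{v_i}(\xi)\|_{\HS}\leq C_{N,i}\langle\xi\rangle^{-N}$; combining these via the inequality above gives the desired estimate for $\|\widehat{v}(\xi)\|_{\HS}$. For the sufficiency, the hypothesis forces each scalar sequence $\widehat{v_i}(\xi)$ to decay rapidly, so the scalar proposition produces $v_i\in C^\infty(G)$ with those Fourier coefficients, and $v\coloneqq(v_1,\dots,v_n)$ is the required smooth $\C^n$-valued function.

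There is essentially no serious obstacle here once the scalar case is invoked; the main point where care is needed is to verify that the block-matrix definition of $\widehat{v}(\xi)$ in \cite{HomoVector} and the componentwise scalar Fourier coefficient agree as distributions, which is a direct unwinding of the definitions rather than a substantive difficulty.
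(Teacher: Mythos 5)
Your proof is correct and follows essentially the same route as the paper: reduce to the componentwise scalar case via the identity $\|\widehat{v}(\xi)\|_{\HS}^2=\sum_{i=1}^n\|\widehat{v_i}(\xi)\|_{\HS}^2$ and invoke the scalar characterization from \cite{Ruz}. Your write-up just makes the norm bookkeeping and the identification of the block entries with the scalar Fourier coefficients more explicit than the paper's sketch.
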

\begin{proof}
    Indeed, this follows directly from the following facts: 
    \begin{enumerate}
        \item A function $f:G\to C^n$ is smooth if and only if $f_i:G\to \C^m$ is smooth, for every $1\leq i\leq n$,
        \item A sequence of matrices $\widehat{v}(\xi)\in \mathbb{C}^{d_\xi\times d_\xi}$, $[\xi]\in\widehat{G}$, corresponds to the Fourier coefficients of a function in $C^\infty(G)$ if and only if for every $N>0$, there exists $C_N>0$ such that 
    \begin{equation*}
        \|\widehat{v}(\xi)\|_{\HS}\leq C_N\langle \xi\rangle^{-N}.
    \end{equation*}
     for every $[\xi]\in\widehat{G}$ (see \cite{Ruz}).
        \item For $f\in L^1(G,\C^n)$ and $N>0$, $\|\widehat{f}(\xi)\|_{\HS}\leq C_N\langle \xi\rangle^{-N}$ for some $C_N>0$ and every $[\xi]\in\widehat{G}$ if and only if $\|\widehat{f_i}(\xi)\|_{\HS}\leq C_{i,N}'\langle \xi\rangle^{-N}$ for every $[\xi]\in\widehat{G}$ for some $C_{i,N}'>0$ and every $1\leq i\leq n$,  $[\xi]\in\widehat{G}$.
        \end{enumerate}
\end{proof}
\begin{prop}\label{prop_fourier_distrib}
    Given a sequence of block column matrices $\widehat{v}(\xi)\in(\mathbb{C}^{d_\xi\times d_\xi})^{n\times 1}$, $[\xi]\in\widehat{G}$, they correspond to the Fourier coefficients of a distribution in $\mathcal{D}'(G,\C^n)$ if and only if there exist $C,N>0$ such that 
    \begin{equation*}
        \|\widehat{v}(\xi)\|_{\HS}\leq C\langle \xi\rangle^{N},
    \end{equation*}
     for every $[\xi]\in\widehat{G}$ and $1\leq i\leq n$.
     \end{prop}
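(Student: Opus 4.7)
The plan is to prove Proposition \ref{prop_fourier_distrib} by mirroring the three-step structure of the proof of Proposition \ref{prop_fourier_smooth}, but replacing fast decay by polynomial growth. The extra ingredient needed is the scalar analogue for distributions: a sequence $\widehat{w}(\xi) \in \mathbb{C}^{d_\xi \times d_\xi}$ corresponds to the Fourier coefficients of some $w \in \mathcal{D}'(G)$ if and only if there exist $C,N>0$ with $\|\widehat{w}(\xi)\|_{\HS} \leq C \langle \xi \rangle^N$ for every $[\xi]\in\widehat{G}$. This is a classical fact that I would cite from \cite{Ruz}.

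First, I would record the identification $\mathcal{D}'(G,\C^n) \cong \mathcal{D}'(G)^n$: given $u\in\mathcal{D}'(G,\C^n)$, the scalar components $u_i\in\mathcal{D}'(G)$ are defined by pairing $u$ against $\varphi\otimes e_i$ for $\varphi\in C^\infty(G)$; conversely, any $n$-tuple $(u_1,\dots,u_n)$ of scalar distributions defines an element of $\mathcal{D}'(G,\C^n)$. Directly from the definitions in the preliminaries, the block-column Fourier coefficient $\widehat{u}(\xi)$ is exactly the vertical stack of the scalar Fourier coefficients $\widehat{u_i}(\xi)$.

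Second, the block-column structure yields the elementary identity
\begin{equation*}
\|\widehat{v}(\xi)\|_{\HS}^2 \;=\; \sum_{i=1}^n \|\widehat{v_i}(\xi)\|_{\HS}^2,
\end{equation*}
from which one reads off the two-sided bound $\max_i \|\widehat{v_i}(\xi)\|_{\HS} \leq \|\widehat{v}(\xi)\|_{\HS} \leq \sqrt{n}\,\max_i \|\widehat{v_i}(\xi)\|_{\HS}$. Consequently, a polynomial bound $\|\widehat{v}(\xi)\|_{\HS}\leq C\langle\xi\rangle^N$ holds for some $C,N>0$ if and only if each component admits such a bound (possibly with different constants $C_i$ but a common exponent, obtained by taking the maximum of finitely many exponents).

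Combining these two observations with the scalar distributional characterization gives the equivalence: if $\widehat{v}(\xi)$ grows polynomially, then each $\widehat{v_i}$ corresponds to some $u_i\in\mathcal{D}'(G)$, and $u:=(u_1,\dots,u_n)\in\mathcal{D}'(G,\C^n)$ has the prescribed Fourier coefficients; conversely, any $u\in\mathcal{D}'(G,\C^n)$ has scalar components $u_i\in\mathcal{D}'(G)$ whose coefficients grow polynomially, and the pointwise bound above propagates to the block column. I do not anticipate a genuine obstacle here — the argument is a direct transcription of the proof of Proposition \ref{prop_fourier_smooth} with ``for every $N$ there exists $C_N$'' replaced by ``there exist $C,N$'' throughout. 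The only mildly delicate point is ensuring that when passing between the component bounds and the aggregate bound one selects a single exponent $N$ that works for all components simultaneously, which is handled by taking the maximum over the $n$ indices.
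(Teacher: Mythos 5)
Your argument is correct and coincides with the paper's own proof: both decompose $u\in\mathcal{D}'(G,\C^n)$ into scalar components obtained by pairing against $\varphi\otimes e_i$, note that the block Fourier coefficient is the stack of the scalar ones, and reduce to the scalar polynomial-growth characterization from \cite{Ruz} via the elementary comparison $\|\widehat{v}(\xi)\|_{\HS}^2=\sum_{i=1}^n\|\widehat{v_i}(\xi)\|_{\HS}^2$. You merely spell out details (the two-sided norm bound and the choice of a common exponent) that the paper leaves implicit in its remark that the proof is analogous to that of Proposition \ref{prop_fourier_smooth}.
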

\begin{proof}
    Notice that every linear functional $u$ over smooth vector valued functions can be written as
    \begin{equation*}
        \langle u , f\rangle = \langle \tilde u_1,f_1\rangle + \dots \langle \tilde u_n,f_n\rangle,
    \end{equation*}
    where each linear functional $\tilde u_i:C^\infty(G)\to \mathbb{C}$ is given by $\langle \tilde u_i,f_i\rangle =\langle u, f_i\otimes e_i\rangle$, $1\leq i\leq n$. Moreover, $u$ is continuous if and only if each $\tilde u_i$ is continuous and when these are all continuous, our definition of $\widehat{u_i}(\xi)$ is consistent with the Fourier transform of $\tilde u_i$, for every $[\xi]\in\widehat{G}$. Hence, the proof is analogous to the proof of Proposition \ref{prop_fourier_smooth}, in the sense that it is a consequence of the scalar-valued case proved in \cite{Ruz}.
\end{proof}

For every $s\in\R$, the Sobolev space  $H^{s}(G,\C^n)$ is defined to be the set of all $u\in \mathcal{D}'(G,\C^n)$ such that Sobolev norm
\begin{align}\label{eqsobonorm}
    \|u\|_{H^{s}(G,\C^n)}
    &\doteq \left(\sum_{[\xi]\in\widehat{G}}d_\xi\langle \xi\rangle^{2s} \sum_{i=1}^{n}\|\widehat{u_i}(\xi)\|_{\HS}^2\right)^{\frac{1}{2}}\\
&=\left(\sum_{[\xi]\in\widehat{G}}d_\xi\langle \xi\rangle^{2s} \|\widehat{u}(\xi)\|_{\HS}^2\right)^{\frac{1}{2}}\notag
\end{align}
is finite. Here, $\langle\xi\rangle = (1+\nu_\xi)^{\frac{1}{2}}$ denotes the common eigenvalue of the operator $(\text{Id}+\mathcal{L}_G)^{\frac{1}{2}}$ corresponding to the coefficient functions of $[\xi]\in\widehat{G}$, and where $\mathcal{L}_G$ is the positive Laplace-Beltrami operator on $G$. 

Taking into account the growth of the eigenvalues $\langle\xi\rangle$ (see \cite{Ruz}), we obtain as a consequence of propositions \ref{prop_fourier_smooth} and \ref{prop_fourier_distrib} that
\begin{equation}\label{sobolev-cup-cap}
  \bigcap_{s\in\R} H^s(G,\C^n)=C^\infty(G,\C^n) \text{ and } \bigcup_{s\in\R} H^s(G,\C^n)=\mathcal{D}'(G,\C^n).  
\end{equation}

  Let $m\in \N$ and consider a continuous linear operator $D:C^\infty(G,\C^n)\to C^\infty(G,\C^m)$. Define its matrix-valued symbol by
\begin{equation}\label{symbol}
    \sigma_D(i,j,x,\xi) = \xi(x)^*e_j^*[D(\xi\otimes e_i)(x)],
\end{equation}
for $1\leq i\leq n$, $1\leq j\leq m$, $x\in G$ and $[\xi]\in\widehat{G}$, where $e_j^*[v]\doteq\langle v,e_j\rangle_{\C^m}$, for $v\in\C^m$. Note that here we use the same notation for the canonical basis vectors of $\C^m$, that is,  $\{e_1,\dots,e_m\}$, as no confusion should arise. In \cite{HomoVector} the authors proved that for every $f\in C^\infty(G,\C^n)$, we have that
\begin{equation}\label{quantisation1}
    Df(x) = \sum_{j=1}^m\left(\sum_{[\xi]\in\widehat{G}}d_\xi\Tr\left[\sum_{i=1}^n\sigma_D(i,j,x,\xi)\widehat{f}_i(\xi)\xi(x)\right]\right)e_j,
\end{equation}
 for every $x\in G$. 
 This formula can also be written as 
\begin{equation*}
    Df(x) = \sum_{[\xi]\in\widehat{G}}d_\xi\Tr[\sigma_D(x,\xi)\widehat{f}(\xi)\xi(x)],
\end{equation*}
where $\sigma_D(x,\xi)\in (\mathbb{C}^{d_\xi\times d_\xi})^{m\times n}$ is the block matrix with blocks given by: 
\begin{equation*}
    \sigma_D(x,\xi)_{ij}=\sigma_D(j,i,x,\xi)
\end{equation*}
    for $ 1\leq j\leq n,\ 1\leq i\leq m$ and $ (x,[\xi])\in G\times \widehat{G}$, and the trace for the $m\times 1$ block matrices in the formula  above should be understood component-wise. A similar formula also holds for $f\in\mathcal{D}'(G,\C^n)$, with convergence in the sense of distributions.

Once again, the symbol $\sigma_D$ above does not depend on $x$ if and only if $D$ is left-invariant, and similarly to the scalar case we write $\sigma_D(x,\xi)=\sigma_D(\xi),\ \sigma_D(i,j,x,\xi)=\sigma_D(i,j,\xi)$. As a consequence of the quantization formula \eqref{quantisation1}, we obtain that in this case $\widehat{(Df)_j}(\xi)=\sum_{i=1}^n\sigma_D(i,j,\xi)\widehat{f_i}(\xi)$, or equivalently, 
$\widehat{Df}(\xi)=\sigma_D(\xi)\widehat{f}(\xi)$, for every $[\xi]\in\widehat{G}$, and we also say that $D$ is a Fourier multiplier.

% , equalities \eqref{sobolev-cup-cap} imply the following characterizations:
% \begin{prop}
%     Given a sequence of block column matrices $\widehat{v}(\xi)\in(\mathbb{C}^{d_\xi\times d_\xi})^{n\times 1}$, $[\xi]\in\widehat{G}$, they correspond to the Fourier coefficients of a function in $C^\infty(G,\C^n)$ if and only if for every $N>0$, there exists $C_N>0$ such that 
%     \begin{equation*}
%         \|\widehat{v}(\xi)\|_{\HS}\leq C_N\langle \xi\rangle^{-N},
%     \end{equation*}
%      for every $[\xi]\in\widehat{G}$ and $1\leq i\leq n$.
% \end{prop}
% \begin{prop}
%     Given a sequence of block column matrices $\widehat{v}(\xi)\in(\mathbb{C}^{d_\xi\times d_\xi})^{n\times 1}$, $[\xi]\in\widehat{G}$, they correspond to the Fourier coefficients of a distribution in $\mathcal{D}'(G,\C^n)$ if and only if there exist $C,N>0$ such that 
%     \begin{equation*}
%         \|\widehat{v}(\xi)\|_{\HS}\leq C\langle \xi\rangle^{N},
%     \end{equation*}
%      for every $[\xi]\in\widehat{G}$ and $1\leq i\leq n$.
% \end{prop}

\section{Main Results and Applications}\label{sec3}

In this section we apply the theory developed in Section \ref{sec2} to obtain a characterization of the global hypoellipticity of systems of left-invariant pseudo-differential operators on compact Lie groups. We note that this result can also be seen as a consequence of \cite[Theorem 3.2]{CardKow}, however we choose to present a proof that does not rely on the homogeneous vector bundle structure.  We then explore a few particular cases of systems which allow for obtaining other sufficient conditions for global hypoellipticity.

\noindent First, we recall the definition of global hypoellipticity on compact Lie groups.

Let $G$ be a compact Lie group, $D:C^\infty(G)\to C^\infty(G)$ a continuous linear operator. The operator $D$ is said to be globally hypoelliptic if
$$u\in\mathcal{D}'(G)\text{ and } Du=f\in C^\infty(G)\implies u\in C^\infty(G).$$
\noindent This definition extends naturally to vector-valued functions, as follows:

Let $n,m\in\N$, $G$ be a compact Lie group and $D:C^\infty(G,\C^n)\to C^\infty(G,\C^m)$ a continuous linear operator. The operator $D$ is said to be globally hypoelliptic if
$$u\in\mathcal{D}'(G,\C^n)\text{ and } Du=f\in C^\infty(G,\C^m)\implies u\in C^\infty(G,\C^n).$$

\begin{theorem}\label{theo-vector}
    Let $n,m\in\N$, $G$ a compact Lie group and $D:C^\infty(G,\mathbb{C}^n)\to C^\infty(G,\mathbb{C}^m)$ a left-invariant continuous linear operator. Then $D$ is globally hypoelliptic if and only if there exist $k\in\R$ and $C>0$ such that
    \begin{equation}\label{ineq_theo}
        \lambda_{\min}[\sigma_D(\xi)]\geq C\langle\xi\rangle^k,
    \end{equation}
    for all but finitely many $[\xi]\in\widehat{G}$, where $\lambda_{\min}[\sigma_D(\xi)]$ denotes the smallest singular value of the block matrix $\sigma_D(\xi)$, for $[\xi]\in\widehat{G}$.
\end{theorem}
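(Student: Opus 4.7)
The plan is to read the equation $Du=f$ through the left-invariant quantisation identity
\[
\widehat{Du}(\xi)=\sigma_D(\xi)\,\widehat{u}(\xi),\qquad [\xi]\in\widehat G,
\]
where I view $\sigma_D(\xi)$ as an ordinary $(m d_\xi)\times(n d_\xi)$ complex matrix, $\widehat{u}(\xi)$ as an $(n d_\xi)\times d_\xi$ matrix, and take $\lambda_{\min}[\sigma_D(\xi)]=\inf_{\|v\|_2=1}\|\sigma_D(\xi)v\|_2$. The key elementary ingredient is the column-wise bound: writing the HS norm as the sum of the squared Euclidean norms of the $d_\xi$ columns and applying $\|Av\|_2\geq\lambda_{\min}(A)\|v\|_2$ to each column gives
\[
\|\sigma_D(\xi)\,\widehat{u}(\xi)\|_{\HS}\geq \lambda_{\min}[\sigma_D(\xi)]\,\|\widehat{u}(\xi)\|_{\HS}.
\]

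For the sufficiency direction, assume \eqref{ineq_theo} and let $u\in\mathcal D'(G,\C^n)$ with $Du=f\in C^\infty(G,\C^m)$. The displayed inequality yields
\[
\|\widehat{u}(\xi)\|_{\HS}\leq C^{-1}\langle\xi\rangle^{-k}\,\|\widehat{f}(\xi)\|_{\HS}
\]
for all but finitely many $[\xi]$. By Proposition \ref{prop_fourier_smooth}, $\|\widehat{f}(\xi)\|_{\HS}$ decays faster than any polynomial, so the same is true of $\|\widehat{u}(\xi)\|_{\HS}$, with the finitely many exceptional $\xi$ being absorbed into the constants. Applying Proposition \ref{prop_fourier_smooth} in the converse direction gives $u\in C^\infty(G,\C^n)$.

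For the necessity direction, I argue contrapositively. If \eqref{ineq_theo} fails, then for every $j\in\N$ there exists $[\xi_j]\in\widehat G$ with $\langle\xi_j\rangle\geq j$ and $\lambda_{\min}[\sigma_D(\xi_j)]<\langle\xi_j\rangle^{-j}$; since $\{[\xi]:\langle\xi\rangle\leq R\}$ is finite for each $R$, the $[\xi_j]$ can be arranged distinct. For each $j$ pick a unit vector $v_j\in\C^{n d_{\xi_j}}$ with $\|\sigma_D(\xi_j)v_j\|_2=\lambda_{\min}[\sigma_D(\xi_j)]$, and define $\widehat u(\xi_j)$ to be the $(n d_{\xi_j})\times d_{\xi_j}$ matrix with $v_j$ in its first column and zeros elsewhere, and $\widehat u(\eta)=0$ for every other $[\eta]\in\widehat G$. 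Then $\|\widehat u(\xi)\|_{\HS}\leq 1$ uniformly, so Proposition \ref{prop_fourier_distrib} certifies $u\in\mathcal D'(G,\C^n)$; on the other hand $\|\widehat u(\xi_j)\|_{\HS}=1$ with $\langle\xi_j\rangle\to\infty$, which by Proposition \ref{prop_fourier_smooth} rules out $u\in C^\infty(G,\C^n)$. Meanwhile, $\|\widehat{Du}(\xi_j)\|_{\HS}=\|\sigma_D(\xi_j)v_j\|_2<\langle\xi_j\rangle^{-j}$ and $\widehat{Du}$ vanishes off the sequence, so Proposition \ref{prop_fourier_smooth} delivers $Du\in C^\infty(G,\C^m)$, contradicting the global hypoellipticity of $D$.

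The only delicate step is the column-wise upgrade of the scalar singular-value inequality to the block-matrix HS inequality, together with the bookkeeping needed to verify that the explicitly prescribed Fourier coefficients really define a distribution via the growth criterion of Proposition \ref{prop_fourier_distrib}; neither is deep, but one must keep clear the distinction between the $n$ (resp.\ $m$) block index, the $d_\xi\times d_\xi$ internal matrix indices, and the $d_\xi$ column index of $\widehat u(\xi)$. Once that is set up cleanly, the theorem reduces to a direct application of Propositions \ref{prop_fourier_smooth} and \ref{prop_fourier_distrib}.
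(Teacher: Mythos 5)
Your proposal is correct and takes essentially the same approach as the paper: the same key Hilbert--Schmidt bound $\|\sigma_D(\xi)\widehat u(\xi)\|_{\HS}\geq\lambda_{\min}[\sigma_D(\xi)]\,\|\widehat u(\xi)\|_{\HS}$ (which the paper proves via the spectral theorem for $\sigma_D(\xi)^*\sigma_D(\xi)$ and you prove column-wise, an equivalent argument) and the same first-column counterexample construction with $\lambda_{\min}[\sigma_D(\xi_\ell)]<\langle\xi_\ell\rangle^{-\ell}$ for the necessity direction. The only cosmetic difference is that your sufficiency argument works directly with the rapid-decay characterizations of Propositions \ref{prop_fourier_smooth} and \ref{prop_fourier_distrib}, while the paper routes the same estimate through the Sobolev norms and \eqref{sobolev-cup-cap}; these are interchangeable.
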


\begin{proof}
    First assume that inequality \eqref{ineq_theo} holds for all $[\xi]\in\widehat{G}\backslash V$, where $V\subset\widehat{G}$ is finite. Then for $u\in \mathcal{D}'(G,\C^n)$ such that $Du\in C^\infty(G,\C^m)$, and for any $s\in\mathbb{R}$, we have that
    \begin{align}\label{ineqprooftheovector1}
        +\infty>\|Du\|^2_{H^s(G,\C^m)} &= \sum_{[\xi]\in\widehat{G}}d_\xi\langle\xi\rangle^{2s} \sum_{j=1}^{m}\|\widehat{(Du)_j}(\xi)\|_{\HS}^2\notag\\
        &=\sum_{[\xi]\in\widehat{G}}d_\xi\langle\xi\rangle^{2s}\sum_{j=1}^{m}\left\|\sum_{i=1}^{n }\sigma_D(i,j,\xi)\widehat{u_i}(\xi)\right\|_{\HS}^2\notag \\
        &=\sum_{[\xi]\in\widehat{G}}d_\xi\langle\xi\rangle^{2s}\left\|\sigma_D(\xi)\widehat{u}(\xi)\right\|_{\HS}^2.
    \end{align}
        But notice that, 
        \begin{align*}
          \left\|\sigma_D(\xi)\widehat{u}(\xi)\right\|_{\HS}
           &\geq 
           \lambda_{\min}[\sigma_D(\xi)]\|\widehat{u}(\xi)\|_{\HS},
        \end{align*}
        for every $[\xi]\in\widehat{G}$, by Lemma \ref{lemmasingvalue} (proven below).
%         Indeed, if $\widehat{(\chi_\tau u)_i}(\xi)=0$, for $1\leq i\leq d_\tau$ this inequality is trivial, otherwise the inequality follows from the fact that
%         \begin{align*}
%             \left\|\sum_{i=1}^{d_{\tau}}\sigma_{D}(i,j,\xi)\widehat{(\chi_\tau u)_i}(\xi)\right\|_{\HS}^2&=\left\|\sum_{i=1}^{d_{\tau}}\sigma_{D}(i,j,\xi)\frac{\widehat{(\chi_\tau u)_i}(\xi)}{(\sum_{\ell=1}^{d_\tau}\|\widehat{(\chi_\tau u)_\ell}(\xi)\|_{\HS}^2)^{\frac{1}{2}}}\right\|_{\HS}^2{\sum_{\ell=1}^{d_\tau}\|\widehat{(\chi_\tau u)_\ell}(\xi)\|_{\HS}^2},
%         \end{align*}
%         for $1\leq j\leq d_\omega$, and
%         \begin{equation*}
%             \sum_{j=1}^{d_\omega} \left\|\sum_{i=1}^{d_{\tau}}\sigma_{D}(i,j,\xi)\frac{\widehat{(\chi_\tau u)_i}(\xi)}{(\sum_{\ell=1}^{d_\tau}\|\widehat{(\chi_\tau u)_\ell}(\xi)\|_{\HS}^2)^{\frac{1}{2}}}\right\|_{\HS}^2\geq m_{\xi}(D)^2
%         \end{equation*}
%         since 
%         \begin{equation*}
% \sum_{i=1}^{d_\tau}\left\|\frac{\widehat{(\chi_\tau u)_i}(\xi)}{(\sum_{\ell=1}^{d_\tau}\|\widehat{(\chi_\tau u)_\ell}(\xi)\|_{\HS}^2)^{\frac{1}{2}}}\right\|_{\HS}^2=\frac{1}{\sum_{\ell=1}^{d_\tau}\|\widehat{(\chi_\tau u)_\ell}(\xi)\|_{\HS}^2}\sum_{i=1}^{d_\tau}\left\|{\widehat{(\chi_\tau u)_i}(\xi)}\right\|_{\HS}^2=1.
%         \end{equation*}
        Hence \eqref{ineq_theo} and \eqref{ineqprooftheovector1} imply that
    \begin{align*}    
         +\infty>\|Du\|^2_{H^s(G,\C^m)}&\geq\sum_{[\xi]\in \widehat{G}\backslash V} d_\xi\langle\xi\rangle^{2s} \lambda_{\min}[\sigma_D(\xi)]^2\|\widehat{u}(\xi)\|_{\HS}^2\\
        & \geq C^2\sum_{[\xi]\in \widehat{G}\backslash V}d_\xi\langle\xi\rangle^{2(s+k)}\|\widehat{u}(\xi)\|_{\HS}^2.
    \end{align*}
    Since $V$ is finite, this implies that $\|u\|_{H^{s+k}(G,\C^n)}<+\infty$. Because this holds for any $s\in\mathbb{R}$, we conclude by \eqref{sobolev-cup-cap} that $u\in C^\infty(G,\C^n)$ and thus $D$ is globally hypoelliptic. 
     Suppose now that inequality \eqref{ineq_theo} does not hold. Note that by the characterization of the smallest singular value, we can write 
     \begin{align*}
         \lambda_{\min}[\sigma_D(\xi)]&=\min\left\{ \|\sigma_D(\xi)v(\xi)\|_2:v(\xi)\in (\C^{d_\xi\times 1})^{n\times 1},\,\|v(\xi)\|_2=1 \right\}\\
         &=\min\left\{\left(\sum_{j=1}^m\left\|\sum_{i=1}^n\sigma_D(i,j,\xi)v(i,\xi)\right\|_2^2\right)^{\frac{1}{2}}:v(i,\xi)\in \C^{d_\xi\times 1},\,\sum_{i=1}^n\|v(i,\xi)\|_2^2=1 \right\}.
     \end{align*}
     So, for each $\ell\in\mathbb{N}$, there exist distinct $[\xi_\ell]\in\widehat{G}$ and $v(i,\xi_\ell) \in \C^{d_\xi\times 1},\,1\leq i\leq n$ satisfying $\sum_{i=1}^n\|v(i,\xi)\|_2^2=1$  such that
    \begin{equation*}
\sum_{j=1}^m\left\|\sum_{i=1}^n\sigma_D(i,j,\xi)v(i,\xi)\right\|_2^2< \langle \xi_\ell\rangle^{-\ell},
    \end{equation*}
    for every $\ell\in \N$. Let $u\in \mathcal{D}'(G,\C^n)$ be defined by the Fourier coefficients
    \begin{align*}
        \widehat{u_i}(\xi) =\begin{cases}
            \begin{pmatrix}
                \lvert&0&\cdots\\
                v(i,\xi_\ell)&0&\cdots\\
                \lvert&0&\cdots
            \end{pmatrix}_{d_\xi\times d_\xi}&\text{if } \xi=\xi_\ell,\ \ell\in\N, \text{ and }\,1\leq i\leq n,\\
            0_{d_\xi\times d_\xi}&\text{otherwise.}
        \end{cases} 
    \end{align*}
     Then $u\in\mathcal{D}'(G,\C^n)\backslash C^\infty(G,\C^n)$, as $\|\widehat{u}(\xi_\ell)\|_{\HS}^2=\sum_{i=1}^n\|v(i,\xi)\|_2^2=1$, for all $\ell\in\mathbb{N}$, and $\widehat{u}(\xi)=0$ for every other $[\xi]\in \widehat{G}$, but on the other hand
      \begin{align*}
        \|\widehat{{Du}}(\xi_\ell)\|_{\HS}^2&=\sum_{j=1}^{m}\left\|\sum_{i=1}^{n}\sigma_{{D}}(i,j,\xi_\ell)\widehat{u_i}(\xi_\ell)\right\|_{\HS}^2 \\
        &=\sum_{j=1}^{m}\left\|\sum_{i=1}^{n}\sigma_{{D}}(i,j,\xi_\ell)v(i,\xi_\ell)\right\|_{2}^2\\
        &<\langle \xi_\ell\rangle^{-\ell},
    \end{align*}
    for all $\ell\in\N$, while $\widehat{{Du}}(\xi)=0$ for all other $[\xi]\in\widehat{G}$,
    therefore $Du\in C^\infty(G,\C^m)$ and so $D$ is not globally hypoelliptic.
\end{proof}
\begin{lemma}\label{lemmasingvalue}
Let $A\in\mathbb{C}^{r\times s}$ and $B\in\mathbb{C}^{s\times p}$, where $r,s,p\in\N$, be two complex matrices. Then 
\begin{equation*}\label{ineqsingularvalue}
        \|AB\|_{HS}\geq \lambda_{\min}[A]\|B\|_{HS}.
    \end{equation*}
\end{lemma}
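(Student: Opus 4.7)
The plan is to reduce the matrix inequality to a column-by-column application of the variational characterization of the smallest singular value that the theorem itself uses, namely $\lambda_{\min}[A]=\min\{\|Ax\|_2 : x\in\mathbb{C}^{s\times 1},\,\|x\|_2=1\}$.

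First, I would write $B$ in terms of its columns $B=[b_1\,|\,b_2\,|\,\cdots\,|\,b_p]$, where $b_j\in\mathbb{C}^{s\times 1}$. The Hilbert--Schmidt norm decomposes as a sum of squared Euclidean norms of the columns, so
\begin{equation*}
\|B\|_{\HS}^2=\sum_{j=1}^{p}\|b_j\|_2^2,\qquad \|AB\|_{\HS}^2=\sum_{j=1}^{p}\|Ab_j\|_2^2,
\end{equation*}
since matrix multiplication acts column-wise, $AB=[Ab_1\,|\,\cdots\,|\,Ab_p]$.

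Next, for each column $b_j$, I would argue column-wise: if $b_j=0$ the inequality $\|Ab_j\|_2\ge \lambda_{\min}[A]\|b_j\|_2$ is trivial, and otherwise one applies the variational definition to the unit vector $b_j/\|b_j\|_2$ to get $\|A(b_j/\|b_j\|_2)\|_2\ge \lambda_{\min}[A]$, which after multiplying by $\|b_j\|_2$ yields the desired bound. Summing the squares over $j$ gives
\begin{equation*}
\|AB\|_{\HS}^2=\sum_{j=1}^{p}\|Ab_j\|_2^2\ge \lambda_{\min}[A]^2\sum_{j=1}^{p}\|b_j\|_2^2=\lambda_{\min}[A]^2\|B\|_{\HS}^2,
\end{equation*}
and taking square roots concludes the proof.

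There is no real obstacle here; the only subtlety is making sure the chosen convention for $\lambda_{\min}$ matches the one used in Theorem \ref{theo-vector}, where the smallest singular value is defined as the minimum of $\|Av\|_2$ over unit vectors $v$ (which may vanish if $A$ has a nontrivial kernel or more columns than rows, but this causes no issue in the inequality). With that convention the column-wise argument is immediate, and the lemma as applied to the block matrix $\sigma_D(\xi)$ and $\widehat{u}(\xi)$ in the theorem works verbatim.
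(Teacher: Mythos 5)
Your proof is correct, but it follows a genuinely different route from the paper's. The paper diagonalizes $A^*A=Q^*\Lambda Q$ via the spectral theorem and bounds the trace, $\|AB\|_{\HS}^2=\Tr\bigl((QB)^*\Lambda QB\bigr)\geq\lambda_{\min}[A]^2\Tr\bigl((QB)^*QB\bigr)$, whereas you decompose $B$ into its columns and apply the variational characterization $\lambda_{\min}[A]=\min\{\|Ax\|_2:\|x\|_2=1\}$ to each column separately before summing the squares. The two arguments rest on the same underlying fact, namely the Rayleigh-quotient bound $\|Ax\|_2^2=x^*A^*Ax\geq\lambda_{\min}[A]^2\|x\|_2^2$: the paper derives it in-line from the spectral theorem (making the lemma slightly more self-contained), while you quote the variational formula, which is legitimate here since the proof of Theorem \ref{theo-vector} already uses exactly that characterization of $\lambda_{\min}$. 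Your version has the advantage of making transparent that the lemma is just the column-wise aggregation of the vector inequality $\|Av\|_2\geq\lambda_{\min}[A]\|v\|_2$, and you correctly flag the only real subtlety, the convention for $\lambda_{\min}$ when $A$ has a nontrivial kernel or more columns than rows (where $\lambda_{\min}[A]=0$ and the inequality is vacuous), which matches the convention used throughout the paper. No gaps.
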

\begin{proof}
Since $A^*A$ is a normal square matrix, it follows from the spectral theorem that we can write  $A^*A=Q^*\Lambda Q$, where $\Lambda\in\mathbb{C}^{s\times s}$ is a diagonal matrix whose entries are given by the eigenvalues of $A^*A$, which correspond to the singular values of $A$ squared, and $Q\in\mathbb{C}^{s\times s}$ is unitary. Then 
\begin{align*}
    \|AB\|_{HS}^2&=\Tr(B^*A^*AB)\\
    &=\Tr(B^*Q^*\Lambda QB)\\
    &=\Tr((QB)^*\Lambda QB)\\
    &\geq \lambda_{\min}[A]^2\Tr((QB)^*QB)\\
    &=\lambda_{\min}[A]^2\Tr(B^*B)=\lambda_{\min}[A]^2\|B\|_{HS}^2,
\end{align*}    
which implies the claim.
\end{proof}

Notice that when $n=m=1$ the conditions for global hypoellipticity in Theorem \ref{theo-vector} coincide precisely with the conditions for global hypoellipticity of left-invariant continuous linear operators acting on scalar-valued functions, obtained in \cite[Theorem 20]{tese-Nicholas}.

Now consider the system of left-invariant continuous linear operators  $(P_{ji}):C^\infty(G,\mathbb{C}^n)\to C^\infty(G,\mathbb{C}^m)$, for $1\leq i\leq n$, $1\leq j\leq m$, and the associated system of equations:
    \begin{equation}\label{system1}
            \begin{cases}
                \begin{matrix}
        P_{11}u_1&+&\dots &+& P_{1n}u_n&=&f_1\\
        &&\vdots&&&&\vdots\\
        P_{m1}u_1&+&\dots&+&P_{mn}u_n&=&f_m,
    \end{matrix}
            \end{cases}
    \end{equation}
where $f_1,\dots, f_m\in C^\infty(G)$. The system of operators $(P_{ji})$ is said to be globally hypoelliptic if whenever system \eqref{system1} admits solution $u_1,\dots, u_n\in\mathcal{D}'(G)$ for $f_1,\dots, f_m\in C^\infty(G)$, this implies that $u_1,\dots, u_n\in C^\infty(G)$.

\noindent Note that system \eqref{system1} is equivalent to the vector-valued equation
\begin{equation*}
    Pu=f,
\end{equation*}
where $u(x)= (u_1(x),\dots,u_n(x))$,  $f(x)= (f_1(x),\dots,f_m(x))$, and $P: C^\infty(G,\mathbb{C}^n)\to C^\infty(G,\mathbb{C}^m)$ is given by
\begin{equation}\label{eq_operator_P}
    (Pu)_j=P_{j1}u_1+\dots+P_{jn}u_n,\quad 1\leq j\leq m.
\end{equation} 
We identify $P$ with the matrix of operators
\begin{equation*}
    P\equiv\begin{pmatrix}
        P_{11}&\dots&P_{1n}\\
        \vdots&\ddots&\vdots\\
        P_{m1}&\dots&P_{mn}
    \end{pmatrix}.
\end{equation*}
Notice that with these identifications, the concepts of global hypoellipticity for systems and for vector-valued functions coincide.

\noindent Furthermore, identity \eqref{eq_operator_P} implies that the symbol of $P$ as defined in \eqref{symbol} is given by 
\begin{equation*}
    \sigma_P(i,j,\xi)=\sigma_{P_{ji}}(\xi),
\end{equation*}
for every $1\leq i\leq n$, $1\leq j\leq m$ and $[\xi]\in\widehat{G}$, so the matrix-valued matrix $\sigma_P(\xi)$ is given by
\begin{equation*}
    \sigma_P(\xi)=\begin{pmatrix}
        \sigma_{P_{11}}(\xi)&\dots &\sigma_{P_{1n}}(\xi)\\
        \vdots&\ddots&\vdots\\
         \sigma_{P_{m1}}(\xi)&\dots &\sigma_{P_{mn}}(\xi)
    \end{pmatrix},
\end{equation*}
that is, 
\begin{equation*}
    \begin{pmatrix}
        \sigma_{P_{11}}(\xi)&\dots &\sigma_{P_{1n}}(\xi)\\
        \vdots&\ddots&\vdots\\
         \sigma_{P_{m1}}(\xi)&\dots &\sigma_{P_{mn}}(\xi)
    \end{pmatrix}\begin{pmatrix}
        \widehat{u_1}(\xi)\\\vdots\\\widehat{u_n}(\xi)
    \end{pmatrix}=\begin{pmatrix}
        \widehat{f_1}(\xi)\\\vdots\\\widehat{f_m}(\xi)
    \end{pmatrix},
\end{equation*}
for every $[\xi]\in\widehat{G}$. In particular, the smallest singular value of $\sigma_P(\xi)$, can then be written as 
\begin{equation}\label{m_xi-for-systems}
     \lambda_{\min}[\sigma_P(\xi)]= \min\left\{\left(\sum_{j=1}^{n}\left\|\sum_{i=1}^{m}
         \sigma_{P_{ji}}(\xi)v(i,\xi)\right\|_{2}^2\right)^{\frac{1}{2}}:\,v(i,\xi)\in \mathbb{C}^{d_\xi},\,\sum_{i=1}^{m}\|v(i,\xi)\|_2^2=1\right\}.
    \end{equation}
Therefore, from Theorem \ref{theo-vector} and the previous considerations, we obtain the following corollaries.
\begin{corollary}\label{theo-systems}
    Let  $n,m\in\N$ and $(P_{ji}):C^\infty(G,\mathbb{C}^n)\to C^\infty(G,\mathbb{C}^m)$ be a $m \times n$ system of left-invariant continuous linear operators.  Then the system $(P_{ji})$ is globally hypoelliptic if and only if there exist $k\in\R$ and $C>0$ such that
    \begin{equation*}
        \lambda_{\min}[\sigma_P(\xi)]\geq C\langle\xi\rangle^k,
    \end{equation*}
    for all but finitely many $[\xi]\in\widehat{G}$, where $\lambda_{\min}[\sigma_P(\xi)]$ denotes the smallest singular value of the block matrix 
    \begin{equation*}
    \sigma_P(\xi)=\begin{pmatrix}
        \sigma_{P_{11}}(\xi)&\dots &\sigma_{P_{1n}}(\xi)\\
        \vdots&\ddots&\vdots\\
         \sigma_{P_{m1}}(\xi)&\dots &\sigma_{P_{mn}}(\xi)
    \end{pmatrix}\in (\mathbb{C}^{d_\xi\times d_\xi})^{m\times n},
\end{equation*}
for every $[\xi]\in\widehat{G}$.
\end{corollary}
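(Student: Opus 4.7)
The plan is to derive this corollary as a direct application of Theorem \ref{theo-vector} to the vector-valued operator $P$ associated with the system, using the identifications already established in the discussion just preceding the statement.

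First I would observe that the system of equations in \eqref{system1} is, by construction, equivalent to the single vector-valued equation $Pu=f$, where $P:C^\infty(G,\mathbb{C}^n)\to C^\infty(G,\mathbb{C}^m)$ is defined componentwise by \eqref{eq_operator_P}. Since each $P_{ji}$ is continuous and left-invariant, so is $P$: continuity follows because $P$ is a finite linear combination of continuous maps composed with the projection/inclusion maps of the components, and left-invariance follows because left translation on $C^\infty(G,\mathbb{C}^n)$ acts componentwise, commuting with each $P_{ji}$. I would then remark explicitly that the notion of global hypoellipticity for the system $(P_{ji})$ coincides with that for $P$ as a vector-valued operator: a distributional solution $u=(u_1,\dots,u_n)\in\mathcal{D}'(G,\mathbb{C}^n)$ of $Pu=f\in C^\infty(G,\mathbb{C}^m)$ is smooth if and only if each component $u_i$ is smooth, which is exactly the content of the system definition.

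Next I would identify the symbol of $P$ with the claimed block matrix. Applying the definition \eqref{symbol} componentwise gives
\begin{equation*}
\sigma_P(i,j,\xi)=\xi(x)^*e_j^*\left[P(\xi\otimes e_i)(x)\right]=\xi(x)^*\left[P_{ji}\xi\right](x)=\sigma_{P_{ji}}(\xi),
\end{equation*}
since only the $j$-th component of $P(\xi\otimes e_i)$ is nontrivially picked out by $e_j^*$, and that component equals $P_{ji}\xi$ by \eqref{eq_operator_P}. Thus $\sigma_P(\xi)$ is precisely the $m\times n$ block matrix displayed in the corollary, and therefore its smallest singular value $\lambda_{\min}[\sigma_P(\xi)]$ admits the variational description \eqref{m_xi-for-systems}.

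Finally I would invoke Theorem \ref{theo-vector} applied to the operator $P$: it asserts that $P$ is globally hypoelliptic if and only if there exist $k\in\mathbb{R}$ and $C>0$ with $\lambda_{\min}[\sigma_P(\xi)]\geq C\langle\xi\rangle^k$ for all but finitely many $[\xi]\in\widehat{G}$. Combined with the equivalence between global hypoellipticity of the system and of $P$, this yields the corollary. There is no real obstacle here — the work is entirely bookkeeping, and the only point that deserves care is verifying that $\sigma_P(i,j,\xi)=\sigma_{P_{ji}}(\xi)$, which is a short direct computation from the definitions.
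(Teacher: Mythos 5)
Your proposal is correct and follows essentially the same route as the paper: identify the system with the vector-valued left-invariant operator $P$ of \eqref{eq_operator_P}, verify $\sigma_P(i,j,\xi)=\sigma_{P_{ji}}(\xi)$ so that $\sigma_P(\xi)$ is the displayed block matrix, and apply Theorem \ref{theo-vector}. The paper's derivation is exactly this bookkeeping, so there is nothing to add.
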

\begin{corollary}  
    Let $r,n,m\in\N$ and $(P_{ji}):C^\infty(\T^r,\mathbb{C}^n)\to C^\infty(\T^r,\mathbb{C}^m)$ be a $m\times n$ system of Fourier multipliers. Then the system $(P_{ji})$ is globally hypoelliptic if and only if there exist $k\in\R$ and $C>0$ such that
    \begin{equation*}
        \lambda_{\min}\left[\begin{pmatrix}
        {P_{11}}(\xi)&\dots &{P_{1n}}(\xi)\\
        \vdots&\ddots&\vdots\\
         {P_{m1}}(\xi)&\dots &{P_{mn}}(\xi)
    \end{pmatrix}\right]\geq C(1+\|\xi\|_2^2)^{k/2},
    \end{equation*}
    for all but finitely many $\xi\in\Z^r$, where $\xi\mapsto P_{ji}(\xi)\in\mathbb{C}$ denotes the symbol of the operator $P_{ji}$.
\end{corollary}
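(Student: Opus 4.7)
The corollary is a direct specialization of Corollary \ref{theo-systems} to $G=\T^r$, so the plan is to make two identifications explicit and then invoke the previous corollary verbatim. No new analytic content is needed; the entire argument consists of translating the general statement into the concrete torus setting.

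First, I would recall that the unitary dual $\widehat{\T^r}$ is parametrized by $\Z^r$, where each $\xi\in\Z^r$ corresponds to the character $x\mapsto e^{i\xi\cdot x}$, and every such representation is one-dimensional: $d_\xi=1$ for every $\xi$. Consequently each block $\sigma_{P_{ji}}(\xi)\in\mathbb{C}^{d_\xi\times d_\xi}$ is a single scalar, and by the definition \eqref{symbol} of the symbol this scalar coincides with the usual Fourier multiplier symbol $P_{ji}(\xi)\in\mathbb{C}$. Hence the block matrix $\sigma_P(\xi)$ of Corollary \ref{theo-systems} collapses to the ordinary $m\times n$ complex matrix with entries $P_{ji}(\xi)$, and its smallest singular value in the block sense reduces to the smallest singular value of that ordinary complex matrix, which is exactly the quantity appearing on the left-hand side of the inequality to be proved.

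Second, I would identify the weight $\langle\xi\rangle$ in the torus case. The positive Laplace--Beltrami operator $\mathcal{L}_{\T^r}$ acts on the character $x\mapsto e^{i\xi\cdot x}$ by multiplication by $\|\xi\|_2^2$, so the eigenvalue $\nu_\xi$ attached to $[\xi]$ satisfies $\nu_\xi=\|\xi\|_2^2$. Therefore
\begin{equation*}
    \langle\xi\rangle=(1+\nu_\xi)^{1/2}=(1+\|\xi\|_2^2)^{1/2},
\end{equation*}
and $\langle\xi\rangle^k=(1+\|\xi\|_2^2)^{k/2}$, which matches the exponent on the right-hand side of the claimed inequality.

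With these two identifications in place, the statement of Corollary \ref{theo-systems} becomes, word for word, the statement of the corollary at hand, and there is nothing further to prove. The only mild point that needs care — hardly an obstacle — is to confirm that the conventions for the Fourier transform on $\T^r$ and for the normalization of the Laplacian are the same as those fixed in \cite{Ruz} and \cite{HomoVector}; once this is verified, the result is immediate.
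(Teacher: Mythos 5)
Your proposal is correct and matches the paper's own (implicit) argument: the corollary is stated there as an immediate specialization of Corollary \ref{theo-systems} to $G=\T^r$, using exactly the identifications you make, namely $d_\xi=1$ so that $\sigma_P(\xi)$ is the ordinary $m\times n$ matrix $(P_{ji}(\xi))$, and $\langle\xi\rangle=(1+\|\xi\|_2^2)^{1/2}$ since $\nu_\xi=\|\xi\|_2^2$ for the character $x\mapsto e^{i\xi\cdot x}$. Nothing further is needed.
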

Since we can relate the smallest singular value of a block matrix and the singular values of its blocks, we can relate the singular values of the symbols $\sigma_{P_{ji}}$ and the global hypoellipticity of the system $(P_{ji})$. Below we present a few particular cases which illustrate this connection.

In \cite{errata} (in a correction of \cite{PiazzaPoliti}) the author proved that for an $\ell\times \ell$ complex matrix $A$, its smallest singular value is bounded from below by
\begin{equation*}
    \lambda_{\min}[A]\geq |\det A|{\Bigg(}\frac{\ell-1}{\|A\|_{\HS}^2}\Bigg)^{\frac{\ell-1}{2}}.
\end{equation*}
Consequently, for a system $(P_{ji})$ such as Corollary \ref{theo-systems} where $m=n\geq 2$, we have that
\begin{align*}
    \lambda_{\min}[\sigma_P(\xi)]&\geq |\det \sigma_P(\xi)|\left(\frac{m\cdot d_\xi-1}{\|\sigma_{P}(\xi)\|_{\HS}^2}\right)^{\frac{m\cdot d_\xi-1}{2}}\\&= |\det \sigma_P(\xi)|({m\cdot d_\xi-1})^{\frac{m\cdot d_\xi-1}{2}}\left({\sum_{i,j=1}^m\|\sigma_{P_{ji}}(\xi)\|_{\HS}^2}\right)^{-\frac{m\cdot d_\xi-1}{2}}\\
    &\geq e^{-1/(2e)}|\det \sigma_P(\xi)|\left({\sum_{i,j=1}^m\|\sigma_{P_{ji}}(\xi)\|_{\HS}^2}\right)^{-\frac{m\cdot d_\xi-1}{2}},
\end{align*}
for every $[\xi]\in\widehat{G}$. Hence if 
\begin{equation*}
    |\det \sigma_P(\xi)|\left({\sum_{i,j=1}^m\|\sigma_{P_{ji}}(\xi)\|_{\HS}^2}\right)^{-\frac{m\cdot d_\xi-1}{2}}\geq C\langle\xi\rangle^{k},
\end{equation*}
for all but finitely many $[\xi]\in \widehat{G}$, by Corollary \ref{theo-systems} we conclude that the system $(P_{ji})$ is globally hypoelliptic.  Notice that for any $N\times N$ square matrix $A$ we have $\|A\|_{HS}^2\leq N\|A\|_{\operatorname{op}}^2$, and since each $P_{ji}$ acts continuously on smooth functions, without loss of generality we may assume that there exists $\tau_P\in\R$ such that every symbol $\sigma_{P_{ji}}(\xi)$ is in the Ruzhansky-Turunen symbol class $\mathscr{S}^{\tau_P}_{0,0}(G\times \widehat{G})$, in other words there exist $C_{P}=\max\{C_{P_{ji}}\}>0,\tau_{P}=\max\{\tau_{P_{ji}}\}\in\R$ such that
\begin{equation}\label{ineq-symbol-class}
    \|\sigma_{P_{ji}}(\xi)\|_{\operatorname{op}}\leq C_{P}\langle\xi\rangle^{\tau_{P}},
\end{equation}
for every $[\xi]\in\widehat{G}$.
Therefore we have that
\begin{align*}
     \left({\sum_{i,j=1}^m\|\sigma_{P_{ji}}(\xi)\|_{\HS}^2}\right)^{-\frac{m\cdot d_\xi-1}{2}}&\geq \left(d_\xi{\sum_{i,j=1}^m\|\sigma_{P_{ji}}(\xi)\|_{\operatorname{op}}^2}\right)^{-\frac{m\cdot d_\xi-1}{2}}\\
     &\geq\left(C_G\langle\xi\rangle^{\frac{d}{2}}m^2 C_P^2\langle\xi\rangle^{2\tau_P}\right)^{-\frac{m\cdot d_\xi-1}{2}},
\end{align*}
 where we have used the fact that $d_\xi\leq C_G\langle\xi\rangle^{\frac{d}{2}}$, for some $C_G>0$, $d=\dim(G)$ and every $[\xi]\in\widehat{G}$, which follows from the Weyl formula for the counting function of the eigenvalues of the first-order elliptic operator $(\operatorname{Id}+\mathcal{L}_G)^{1/2}$. 

Next we consider two cases: $\tau_P<-\frac{d}{4}$ or $d_\xi\leq K$ for some $K\in\N$ and all $[\xi]\in\widehat{G}$.

If we assume that $\tau_P<-\frac{d}{4}$, then for large enough $\langle\xi\rangle$ (and so for all but finitely many $[\xi]\in\widehat{G}$) we have that $C_G\langle\xi\rangle^{\frac{d}{2}}m^2 C_P^2\langle\xi\rangle^{2\tau_P}<1$.
%note that \eqref{ineq-symbol-class} holds with $C_P=\frac{1}{m^2C_G}$ and $\tau_P=\tau_P^*$ for all but finitely many $[\xi]\in\widehat{G}$, where $\tau_P<\tau_P^*<-\frac{d}{2}$. 
Hence in this case
\begin{equation*}
     \left({\sum_{i,j}\|\sigma_{P_{ji}}(\xi)\|_{\HS}^2}\right)^{-\frac{m\cdot d_\xi-1}{2}}\geq 1.
\end{equation*}

Alternatively, if $d_\xi\leq K$, for some  $K\in\N$ and all $[\xi]\in\widehat{G}$, then  
\begin{align*}
     \left({\sum_{i,j=1}^m\|\sigma_{P_{ji}}(\xi)\|_{\HS}^2}\right)^{-\frac{m\cdot d_\xi-1}{2}}&\geq \left(d_\xi{\sum_{i,j=1}^m\|\sigma_{P_{ji}}(\xi)\|_{\operatorname{op}}^2}\right)^{-\frac{m\cdot d_\xi-1}{2}}\\
     &\geq\left(Km^2 C_P^2\langle\xi\rangle^{2\tau_P}\right)^{-\frac{m\cdot d_\xi-1}{2}},
\end{align*}
for every $[\xi]\in\widehat{G}$. Note that if $Km^2C_P^2\langle\xi\rangle^{2\tau_P}\geq 1$ then
\begin{align*}
    \left(Km^2 C_P^2\langle\xi\rangle^{2\tau_P}\right)^{-\frac{m\cdot d_\xi-1}{2}}&\geq \left(Km^2 C_P^2\langle\xi\rangle^{2\tau_P}\right)^{-\frac{mK-1}{2}}\\
    &=\left(Km^2 C_P^2\right)^{-\frac{mK-1}{2}}\langle\xi\rangle^{-\tau_P(mK-1)},
\end{align*}
and if $0<Km^2C_P^2\langle\xi\rangle^{2\tau_P}< 1$, then 
\begin{align*}
    \left(Km^2 C_P^2\langle\xi\rangle^{2\tau_P}\right)^{-\frac{m\cdot d_\xi-1}{2}}&> 1,
\end{align*}
hence 
\begin{align*}
     \left({\sum_{i,j=1}^m\|\sigma_{P_{ji}}(\xi)\|_{\HS}^2}\right)^{-\frac{m\cdot d_\xi-1}{2}}&\geq \min\left\{1,\left(Km^2 C_P^2\right)^{-\frac{mK-1}{2}}\langle\xi\rangle^{-\tau_P(mK-1)}\right\}\\
     &\geq \begin{cases}
         \left(Km^2 C_P^2\right)^{-\frac{mK-1}{2}}\langle\xi\rangle^{-\tau_P(mK-1)}&\text{if }\tau_P>0,\\
         \min\{1,\left(Km^2 C_P^2\right)^{-\frac{mK-1}{2}}\}&\text{if }\tau_P=0,\\
         1&\text{if }\tau_P<0,
     \end{cases}
\end{align*}
for sufficiently large $\langle\xi\rangle$, and so for all but finitely many $[\xi]\in\widehat{G}$.

Therefore by Corollary \ref{theo-systems} and our previous considerations, in both cases we conclude that if there exist $C>0$ and $k\in\R$ such that 
\begin{equation*}
    |\det(\sigma_P(\xi)|\geq C\langle \xi \rangle ^k
\end{equation*}
for all but finitely many $[\xi]\in\widehat{G}$, the system $(P_{ji})$ is globally hypoelliptic.

In conclusion, we have proved the following.
\begin{theorem}
    Let $m\in\N$ and $P=(P_{ji}):C^\infty(G,\mathbb{C}^m)\to C^\infty(G,\mathbb{C}^m)$ be a square system of left-invariant continuous linear operators. Then the system $(P_{ji})$ is globally hypoelliptic if there exist $C>0$ and $k\in\R$ such that
    \begin{equation*}
        |\det \sigma_P(\xi)|\geq C\langle\xi\rangle^k,
    \end{equation*}
    for all but finitely many $[\xi]\in\widehat{G}$
    and either every 
    $P_{ji}$ is of order less than $-\frac{\dim(G)}{4}$, for $1\leq i,j\leq m$ or there exists $K\in \N$ such that $d_\xi\leq K$, for every $[\xi]\in\widehat{G}$.
\end{theorem}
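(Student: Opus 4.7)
The plan is to combine Corollary \ref{theo-systems} with a lower bound on the smallest singular value of a square complex matrix in terms of its determinant and Hilbert-Schmidt norm. Since the hypothesis provides control on $|\det\sigma_P(\xi)|$, and $\sigma_P(\xi)$ is an $(m\cdot d_\xi)\times(m\cdot d_\xi)$ block matrix, I would start by invoking the Piazza-Politi-type inequality (as corrected in \cite{errata}):
\begin{equation*}
\lambda_{\min}[\sigma_P(\xi)]\geq |\det\sigma_P(\xi)|\left(\frac{m\cdot d_\xi -1}{\|\sigma_P(\xi)\|_{\HS}^2}\right)^{\frac{m\cdot d_\xi -1}{2}}.
\end{equation*}
The factor $(m d_\xi -1)^{(m d_\xi -1)/2}$ can be absorbed into a universal constant via the elementary bound $x^{x/2}\geq e^{-1/(2e)}$ for $x\geq 0$, reducing matters to estimating the remaining factor $\left(\sum_{i,j}\|\sigma_{P_{ji}}(\xi)\|_{\HS}^2\right)^{-(m d_\xi-1)/2}$ from below.

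Next I would use that each $P_{ji}$ acts continuously on $C^\infty(G)$, so its symbol lies in some Ruzhansky-Turunen class $\mathscr{S}^{\tau_P}_{0,0}(G\times\widehat{G})$; taking $\tau_P$ as the maximum order over all entries gives a uniform bound $\|\sigma_{P_{ji}}(\xi)\|_{\mathrm{op}}\leq C_P\langle\xi\rangle^{\tau_P}$. Combined with the Hilbert-Schmidt-to-operator norm inequality $\|A\|_{\HS}^2\leq N\|A\|_{\mathrm{op}}^2$ for $N\times N$ matrices and the Weyl bound $d_\xi\leq C_G\langle\xi\rangle^{d/2}$, this yields
\begin{equation*}
\sum_{i,j=1}^m\|\sigma_{P_{ji}}(\xi)\|_{\HS}^2 \leq C_G m^2 C_P^2\langle\xi\rangle^{\frac{d}{2}+2\tau_P}.
\end{equation*}

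At this point the two hypothesis branches come into play. In the first case, where $\tau_P<-d/4$, the right-hand side above is eventually less than $1$, and hence the negative exponential of it is $\geq 1$, so the whole factor drops out and the singular value estimate reduces to $\lambda_{\min}[\sigma_P(\xi)]\gtrsim |\det\sigma_P(\xi)|$; applying Corollary \ref{theo-systems} concludes this case. In the second case, where $d_\xi\leq K$ uniformly, the exponent $(m d_\xi -1)/2$ is bounded by $(mK-1)/2$, so the factor becomes at worst $\left(Km^2 C_P^2\langle\xi\rangle^{2\tau_P}\right)^{-(mK-1)/2}$, which is polynomially bounded in $\langle\xi\rangle$ (with sign depending on that of $\tau_P$), and again a direct application of Corollary \ref{theo-systems} finishes the argument.

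The main subtlety here is precisely the dependence of the exponent $(m d_\xi -1)/2$ on $\xi$: without one of the two hypotheses, the exponent grows with $\langle\xi\rangle$ while the base also varies polynomially, and the resulting estimate degenerates beyond polynomial control and cannot be matched against a $\langle\xi\rangle^k$ lower bound for fixed $k$. Both assumptions are designed exactly to prevent this: the order condition forces the base below $1$ so the exponent's growth is harmless, while the bounded dimension condition caps the exponent itself. Once this dichotomy is handled, the conclusion follows immediately from the singular-value characterization of global hypoellipticity in Corollary \ref{theo-systems}.
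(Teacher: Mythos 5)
Your proposal is correct and follows essentially the same route as the paper: the corrected Piazza--Politi lower bound for $\lambda_{\min}$, absorbing $(m d_\xi-1)^{(m d_\xi-1)/2}$ via $x^{x/2}\geq e^{-1/(2e)}$, the bounds $\|A\|_{\HS}^2\leq N\|A\|_{\operatorname{op}}^2$, $\|\sigma_{P_{ji}}(\xi)\|_{\operatorname{op}}\leq C_P\langle\xi\rangle^{\tau_P}$ and $d_\xi\leq C_G\langle\xi\rangle^{d/2}$, the same two-case dichotomy, and then Corollary \ref{theo-systems}. The only slight gloss is in the bounded-dimension case, where for $Km^2C_P^2\langle\xi\rangle^{2\tau_P}<1$ the correct lower bound is $1$ rather than the stated expression (the paper handles this with a minimum over the two subcases), but your conclusion that the factor admits a polynomial lower bound in $\langle\xi\rangle$ is exactly what is needed.
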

\begin{corollary}
    Let $P:C^\infty(G)\to C^\infty(G)$ be a left-invariant continuous linear operator. Then the system $P$ is globally hypoelliptic if there exist $C>0$ and $k\in\R$ such that
    \begin{equation*}
        |\det \sigma_P(\xi)|\geq C\langle\xi\rangle^k,
    \end{equation*}
    for all but finitely many $[\xi]\in\widehat{G}$
    and either $P$ is of order less than $-\frac{\dim(G)}{4}$, or there exists $K\in \N$ such that $d_\xi\leq K$, for every $[\xi]\in\widehat{G}$.
\end{corollary}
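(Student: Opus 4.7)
The plan is to obtain this corollary as the $n=m=1$ specialization of the preceding Theorem, with only minor bookkeeping adjustments to the argument presented just before that Theorem. In this case the ``block matrix'' $\sigma_P(\xi)$ is literally the $d_\xi \times d_\xi$ matrix-valued symbol of the single operator $P$, so the hypothesis $|\det \sigma_P(\xi)|\geq C\langle\xi\rangle^k$ has exactly the meaning it has in the Theorem (once we read $m=1$). The only subtlety is that the chain of estimates preceding the Theorem was written under the assumption $m=n\geq 2$; I need to check that the same chain goes through with $m=1$, i.e.\ with the integer $m\cdot d_\xi$ replaced everywhere by $d_\xi$.

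Concretely, I would apply the Piazza--Politi lower bound
\begin{equation*}
    \lambda_{\min}[\sigma_P(\xi)]\geq |\det \sigma_P(\xi)|\left(\frac{d_\xi-1}{\|\sigma_P(\xi)\|_{\HS}^2}\right)^{\frac{d_\xi-1}{2}}
\end{equation*}
(valid for $d_\xi\geq 2$, and trivially true for $d_\xi=1$ since then $|\det \sigma_P(\xi)|=\lambda_{\min}[\sigma_P(\xi)]$). As in the proof of the Theorem, the continuity of $P$ on $C^\infty(G)$ yields $\tau_P\in\R$ and $C_P>0$ with $\|\sigma_P(\xi)\|_{\operatorname{op}}\leq C_P\langle\xi\rangle^{\tau_P}$, and combining this with the Weyl bound $d_\xi\leq C_G\langle\xi\rangle^{d/2}$ gives
\begin{equation*}
    \left(\frac{d_\xi-1}{\|\sigma_P(\xi)\|_{\HS}^2}\right)^{\frac{d_\xi-1}{2}}\geq \bigl(C_G\langle\xi\rangle^{d/2}C_P^2\langle\xi\rangle^{2\tau_P}\bigr)^{-\frac{d_\xi-1}{2}}.
\end{equation*}
Splitting into the two cases exactly as before, if $\tau_P<-d/4$ then the base of this exponential is less than $1$ for large $\langle\xi\rangle$, so the factor is $\geq 1$; if instead $d_\xi\leq K$ uniformly, the case analysis on whether $Km^2 C_P^2\langle\xi\rangle^{2\tau_P}$ is $\geq 1$ or $<1$ yields a polynomial lower bound $c\langle\xi\rangle^{k'}$ for some $k'\in\R$. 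In either case, the hypothesis on $|\det \sigma_P(\xi)|$ now gives $\lambda_{\min}[\sigma_P(\xi)]\geq C'\langle\xi\rangle^{k''}$ for all but finitely many $[\xi]\in\widehat{G}$, and Theorem \ref{theo-vector} (equivalently, Corollary \ref{theo-systems}) with $n=m=1$ finishes the proof.

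I do not anticipate any real obstacle: the entire argument is a verbatim copy of what precedes the Theorem, with $m=1$ substituted throughout, and the only genuinely new point is handling $d_\xi=1$ (relevant e.g.\ when $G$ is abelian), where the Piazza--Politi bound degenerates but is in fact unnecessary because $|\det|$ and $\lambda_{\min}$ coincide on $1\times 1$ matrices.
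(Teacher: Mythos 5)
Your proposal is correct and follows essentially the same route as the paper: the corollary is obtained as the $m=1$ specialization of the preceding theorem, whose proof is exactly the displayed chain of estimates (Piazza--Politi determinant bound, the symbol-class bound $\|\sigma_P(\xi)\|_{\operatorname{op}}\leq C_P\langle\xi\rangle^{\tau_P}$, the Weyl bound $d_\xi\leq C_G\langle\xi\rangle^{d/2}$, and the two-case analysis), concluded via Theorem \ref{theo-vector}. Your explicit treatment of the degenerate case $d_\xi=1$, where $|\det\sigma_P(\xi)|=\lambda_{\min}[\sigma_P(\xi)]$, is a worthwhile point of care, since the paper's chain of estimates was written under the assumption $m\cdot d_\xi\geq 2$ and this boundary case is left implicit there.
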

\begin{corollary}
    Let $m,r\in\N$ and $P=(P_{ji}):C^\infty(\T^r,\mathbb{C}^m)\to C^\infty(\T^r,\mathbb{C}^m)$ be a system of Fourier multipliers. Then the system $(P_{ji})$ is globally hypoelliptic if there exist $C>0$ and $k\in\R$ such that
    \begin{equation*}
        \left|\det \begin{pmatrix}
        {P_{11}}(\xi)&\dots &{P_{1n}}(\xi)\\
        \vdots&\ddots&\vdots\\
         {P_{m1}}(\xi)&\dots &{P_{mn}}(\xi)
    \end{pmatrix}\right|\geq C(1+\|\xi\|_2^2)^{k/2},
    \end{equation*}
    for all but finitely many $\xi\in\Z^r$, where $\xi\mapsto P_{ji}(\xi)$ denotes the symbol of $P_{ji}$.
\end{corollary}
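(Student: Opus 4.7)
The plan is to obtain this corollary as a direct specialization of the preceding theorem to the torus. First, I would recall that $\T^r$ is a compact abelian Lie group whose unitary dual can be identified with $\Z^r$ via $\xi \mapsto (x \mapsto e^{2\pi i \xi\cdot x})$. Because $\T^r$ is abelian, every irreducible unitary representation is one-dimensional, so $d_\xi = 1$ for every $[\xi] \in \widehat{\T^r}$. In particular, the uniform bound on $d_\xi$ requested by the hypothesis of the previous theorem is automatically satisfied with $K = 1$, and this disposes of the dichotomy between the two alternative hypotheses without needing to check any order condition on the operators $P_{ji}$.

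Next I would match up the symbolic data. Since each $P_{ji}$ is a Fourier multiplier and each representation is a character $\xi \in \Z^r$, the matrix-valued symbol $\sigma_{P_{ji}}(\xi)$ collapses to the scalar $P_{ji}(\xi) \in \C$. Consequently, the block matrix $\sigma_P(\xi)$ in Corollary \ref{theo-systems} is literally the $m \times m$ complex matrix $(P_{ji}(\xi))_{j,i}$, and its determinant coincides with the determinant appearing in the statement. On the torus the weight used in the Sobolev theory reduces to $\langle \xi \rangle = (1 + \|\xi\|_2^2)^{1/2}$ (this comes from the Laplace-Beltrami operator on $\T^r$ whose eigenvalues are $4\pi^2 \|\xi\|_2^2$, rescaled through the chosen convention; see \cite{Ruz}), so the hypothesis
\[
\left|\det (P_{ji}(\xi))\right| \geq C(1+\|\xi\|_2^2)^{k/2}
\]
for all but finitely many $\xi \in \Z^r$ is exactly the hypothesis $|\det \sigma_P(\xi)| \geq C \langle \xi \rangle^k$ of the previous theorem.

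With these identifications in place, the conclusion is immediate: the hypotheses of the previous theorem are satisfied (the determinant bound is assumed, and the bounded-$d_\xi$ alternative holds with $K=1$), hence its conclusion yields global hypoellipticity of the system $(P_{ji})$. There is no genuine obstacle here; the only point that deserves a line of comment is the observation that on the torus one does not need to restrict the order of the multipliers, since the one-dimensionality of the representations trivially provides the uniform dimension bound that made the order hypothesis redundant.
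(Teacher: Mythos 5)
Your proof is correct and is essentially the paper's intended argument: the corollary is a direct specialization of the preceding theorem to $G=\T^r$, where every representation is a character, so $\sigma_P(\xi)=(P_{ji}(\xi))_{j,i}$, $\langle\xi\rangle$ is comparable to $(1+\|\xi\|_2^2)^{1/2}$, and the bounded-dimension alternative holds trivially with $K=1$, making the order restriction unnecessary.
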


Next, note that if the system $(P_{ji})$ is diagonal, then its global hypoellipticity is determined trivially by the global hypoellipticity of each $P_{jj}$, $1\leq j\leq m$. More precisely, $(P_{ji})$ is globally hypoelliptic if and only if $P_{jj}$ is globally hypoelliptic, for every $1\leq j\leq m$. With this in mind, next consider the following type of systems.

Let $m\geq 2$ and consider $(P_{ji})$ a $m\times m$ system of continuous linear operators such as in Corollary \ref{theo-systems}, and such that $\sigma_P(\xi)$ is block diagonally dominant by rows and columns, for all but finitely many $[\xi]\in\widehat{G}$. A square block matrix $A=(A_{ji})$ is said to be block diagonally dominant by rows and columns if its diagonal blocks are non-singular and it satisfies
\begin{equation}\label{block_dominant}
    \|A_{\ell\ell}^{-1}\|_{\max}^{-1}>\sum_{i\neq \ell}\|A_{\ell i}\|_{\max}\quad\text{ and }\quad \|A_{\ell\ell}^{-1}\|_{\max}^{-1}>\sum_{j\neq \ell}\|A_{j\ell}\|_{\max},
\end{equation}
for all $\ell$. Notice that using the inequality $\|B\|_{\max}\leq \|B\|_{\operatorname{op}}$, which holds for any complex matrix $B$, we have that a sufficient condition for \eqref{block_dominant} to hold is that
\begin{equation}\label{block_dominant2}
    \|A_{\ell\ell}^{-1}\|_{\operatorname{op}}^{-1}>\sum_{i\neq \ell}\|A_{\ell i}\|_{\operatorname{op}}\quad\text{ and }\quad \|A_{\ell\ell}^{-1}\|_{\operatorname{op}}^{-1}>\sum_{j\neq \ell}\|A_{j\ell}\|_{\operatorname{op}}.
\end{equation}
In \cite{VarahLowerBound} the author proved that for a $m\times m$ square block matrix $A$ that is block diagonally dominant by rows and columns, we have that  
\begin{equation*}
    \lambda_{\min}[A]\geq \sqrt{\alpha\beta},
\end{equation*}
where 
\begin{equation*}
    \alpha=\min_{1\leq \ell\leq m}\left\{\|A_{\ell\ell}^{-1}\|_{\max}^{-1}-\sum_{i\neq \ell}\|A_{\ell i}\|_{\max}\right\}\quad \text{and}\quad \beta=\min_{1\leq \ell\leq m}\left\{\|A_{\ell\ell}^{-1}\|_{\max}^{-1}-\sum_{j\neq \ell}\|A_{j \ell}\|_{\max}\right\}.
\end{equation*}
Again using inequality $\|B\|_{\max}\leq \|B\|_{\operatorname{op}}$, and the fact that $\|B^{-1}\|_{\operatorname{op}}^{-1}=\lambda_{\min}[B]$, which holds for any non-singular matrix $B$, we obtain that
\begin{align*}
     \alpha\geq\alpha^*= \min_{1\leq \ell\leq m}\left\{\lambda_{\min}[A_{\ell\ell}]-\sum_{i\neq \ell}\|A_{\ell i}\|_{\operatorname{op}}\right\},
     \end{align*}
     \begin{align*}
         \beta\geq \beta^* = \min_{1\leq \ell\leq m}\left\{\lambda_{\min}[A_{\ell\ell}]-\sum_{j\neq \ell}\|A_{j\ell}\|_{\operatorname{op}}\right\},
\end{align*}
where $n\in\N$ is the common dimension of the blocks of $A$. Hence
\begin{equation*}
    \lambda_{\min}[A]\geq \sqrt{\alpha^*\beta^*},
\end{equation*}
as well. Then using the estimates above we have that
\begin{align*}
    \lambda_{\min}[\sigma_P(\xi)]\geq \sqrt{\alpha^*_\xi\beta^*_\xi},
\end{align*}
for all but finitely many $[\xi]\in\widehat{G}$, where 
\begin{align*}
    \alpha^*_\xi&=\min_{1\leq \ell\leq m}\left\{\lambda_{\min}[\sigma_{P_{\ell\ell}}(\xi)]-\sum_{i\neq \ell}\|\sigma_{P_{\ell i}}(\xi)\|_{\operatorname{op}}\right\},\\
        \beta^*_\xi&=\min_{1\leq \ell\leq m}\left\{\lambda_{\min}[\sigma_{P_{\ell\ell}}(\xi)]-\sum_{j\neq \ell}\|\sigma_{P_{j \ell}}(\xi)\|_{\operatorname{op}}\right\}.
\end{align*}
As before, without loss of generality we may assume that every $P_{i\ell}$, $P_{\ell i}$, $i\neq \ell$ belongs to the same symbol class $\mathscr{S}_{0,0}^{\tau_\ell}(G\times \widehat{G})$, for $1\leq \ell\leq m$.
Then note that
\begin{align}
       \lambda_{\min}[\sigma_{P_{\ell\ell}}(\xi)]-\sum_{i\neq \ell}\|\sigma_{P_{\ell i}}(\xi)\|_{\operatorname{op}}\notag&\geq \lambda_{\min}[\sigma_{P_{\ell\ell}}(\xi)]-\sum_{i\neq \ell}\max_{i\neq \ell}\{C_{P_{\ell i}}\}\langle\xi\rangle^{\tau_{\ell}}\notag\\
       &\geq \left( \lambda_{\min}[\sigma_{P_{\ell\ell}}(\xi)]-(m-1)\max_{i\neq \ell}\{C_{P_{\ell i}}\}\langle\xi\rangle^{\tau_\ell}\right),\label{ineq-dominant}
\end{align}
for $1\leq \ell\leq m$. Let $C'_\ell\doteq \max_{i\neq \ell}\{C_{P_{\ell i}}\}>0$. If there exist $C_\ell>0$ and $k_\ell\in\R$ such that
\begin{equation*}
    \lambda_{\min}[\sigma_{P_{\ell\ell}}(\xi)]\geq C_\ell\langle\xi\rangle^{k_\ell},
\end{equation*}
for $1\leq \ell\leq m$ and all but finitely many $[\xi]\in\widehat{G}$ (so that in particular every $P_{\ell\ell}$ is globally hypoelliptic, by \cite[Theorem 3.1]{CardKow}), then
\begin{align*}
     \lambda_{\min}[\sigma_{P_{\ell\ell}}(\xi)]-(m-1)C_\ell'\langle\xi\rangle^{\tau_\ell}\geq  C_\ell\langle\xi\rangle^{k_\ell}\left(1- C_\ell^{-1}C_\ell' (m-1)\langle\xi\rangle^{\tau_\ell-k_\ell}\right),
\end{align*}
for all but finitely many $[\xi]\in\widehat{G}$. So, if $k_\ell>\tau_\ell$, for large enough $\langle \xi\rangle$ (and so for all but finitely many $[\xi]\in \widehat{G}$), we have that
\begin{equation*}
    1- C_\ell^{-1} C'_\ell(m-1)\langle\xi\rangle^{\tau_\ell-k_\ell}\geq \frac{1}{2},
\end{equation*}
and so applying this inequality to \eqref{ineq-dominant} yields
\begin{align*}
    \alpha^*_\xi\geq \min_{1\leq \ell\leq m}\left\{\frac{1}{2}C_\ell\langle\xi\rangle^{k_\ell}\right\}&\geq \frac{1}{2} \min_{1\leq \ell\leq m}\{C_\ell\}\langle\xi\rangle^{\min_{1\leq \ell\leq m}\{k_\ell\}}\\
    &\geq \frac{C}{2}\langle\xi\rangle^{k},
\end{align*}
where $C= \min_{1\leq \ell\leq m}\{C_\ell\}$ and $k=\min_{1\leq \ell\leq m}\{k_\ell\}$, for all but finitely many $[\xi]\in\widehat{G}$.
Evidently, analogous arguments result in the same estimate for for $\beta^*_\xi$, therefore under these conditions we have that
\begin{equation*}
    \lambda_{\min}[\sigma_P(\xi)]\geq  \frac{C}{2}\langle\xi\rangle^{k},
\end{equation*}
for all but finitely many $[\xi]\in\widehat{G}$. In summary, we have proved the following.
\begin{theorem}\label{thoerem_diagonal}
     Let $m\in\N$ and $P=(P_{ji}):C^\infty(G,\mathbb{C}^m)\to C^\infty(G,\mathbb{C}^m)$ be a system of left-invariant continuous linear operators such that $\sigma_P(\xi)$ is block diagonally dominant by rows and columns for all but finitely many $[\xi]\in\widehat{G}$. Also let $\tau_\ell\in\R$ be large enough so that $P_{j\ell},P_{\ell i}$ have order at most $\tau_\ell$ for every $1\leq i,j,\ell\leq m$, $i,j\neq \ell$.
     Then the system $(P_{ji})$ is globally hypoelliptic if there exist $C_\ell>0$ and $k_\ell\in\R$, $k_\ell>\tau_\ell$ such that
    \begin{equation*}
        \lambda_{\min}[\sigma_{P_{\ell\ell}}(\xi)]\geq C_\ell\langle\xi\rangle^{k_\ell},
    \end{equation*}
    for $1\leq \ell\leq m$ and for all but finitely many $[\xi]\in\widehat{G}$.
\end{theorem}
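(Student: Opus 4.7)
The plan is to reduce the problem to Corollary \ref{theo-systems}, that is, to establish a polynomial lower bound $\lambda_{\min}[\sigma_P(\xi)]\geq C\langle\xi\rangle^{k}$ on all but finitely many $[\xi]\in\widehat{G}$. Since $\sigma_P(\xi)$ is block diagonally dominant by rows and columns for large $\xi$, I would invoke Varah's estimate from \cite{VarahLowerBound}, which gives
$$\lambda_{\min}[\sigma_P(\xi)]\geq \sqrt{\alpha_\xi\beta_\xi},$$
where $\alpha_\xi,\beta_\xi$ are the row and column dominance gaps measured in the $\max$-norm. Using the inequalities $\|B\|_{\max}\leq \|B\|_{\operatorname{op}}$ and $\|B^{-1}\|_{\operatorname{op}}^{-1}=\lambda_{\min}[B]$ for non-singular $B$, I would pass to the operator-norm analogues $\alpha_\xi^*,\beta_\xi^*$ already introduced in the discussion preceding the theorem; these are smaller but fit the available hypotheses perfectly.

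Next I would feed in the symbol-class information. Without loss of generality, each off-diagonal symbol belongs to $\mathscr{S}_{0,0}^{\tau_\ell}(G\times\widehat{G})$, so there is $C'_\ell>0$ with $\|\sigma_{P_{\ell i}}(\xi)\|_{\operatorname{op}},\,\|\sigma_{P_{j\ell}}(\xi)\|_{\operatorname{op}}\leq C'_\ell\langle\xi\rangle^{\tau_\ell}$ for $i,j\neq \ell$. Combining this with the assumed lower bound $\lambda_{\min}[\sigma_{P_{\ell\ell}}(\xi)]\geq C_\ell\langle\xi\rangle^{k_\ell}$ and factoring out $C_\ell\langle\xi\rangle^{k_\ell}$ yields
$$\lambda_{\min}[\sigma_{P_{\ell\ell}}(\xi)]-(m-1)C'_\ell\langle\xi\rangle^{\tau_\ell}\geq C_\ell\langle\xi\rangle^{k_\ell}\bigl(1-C_\ell^{-1}C'_\ell(m-1)\langle\xi\rangle^{\tau_\ell-k_\ell}\bigr).$$
Because the exponent $\tau_\ell-k_\ell$ is strictly negative, the parenthesized factor is at least $1/2$ for $\langle\xi\rangle$ sufficiently large; taking the minimum over $1\leq\ell\leq m$ produces $\alpha_\xi^*\geq \tfrac{C}{2}\langle\xi\rangle^{k}$ with $C=\min_\ell C_\ell$ and $k=\min_\ell k_\ell$. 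The symmetric computation gives the same bound for $\beta_\xi^*$, so Varah's estimate delivers $\lambda_{\min}[\sigma_P(\xi)]\geq \tfrac{C}{2}\langle\xi\rangle^{k}$ for all but finitely many $[\xi]\in\widehat{G}$, and Corollary \ref{theo-systems} finishes the argument.

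The only delicate point is the uniformity in $\ell$ of the off-diagonal absorption: for each $\ell$ the strict gap $k_\ell>\tau_\ell$ is what allows one to dominate $(m-1)C'_\ell\langle\xi\rangle^{\tau_\ell}$ by half of $C_\ell\langle\xi\rangle^{k_\ell}$, and since there are only finitely many values of $\ell$, only finitely many $[\xi]$ in total need to be excluded. Apart from this bookkeeping, the proof is a direct chain of inequalities assembling Varah's bound, the standard norm comparisons, and the decay assumptions on the symbols.
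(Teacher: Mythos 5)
Your proposal is correct and follows essentially the same route as the paper: Varah's lower bound $\lambda_{\min}[\sigma_P(\xi)]\geq\sqrt{\alpha_\xi\beta_\xi}$, the passage to the operator-norm quantities $\alpha^*_\xi,\beta^*_\xi$ via $\|B\|_{\max}\leq\|B\|_{\operatorname{op}}$ and $\|B^{-1}\|_{\operatorname{op}}^{-1}=\lambda_{\min}[B]$, absorption of the off-diagonal terms using $k_\ell>\tau_\ell$, and conclusion via Corollary \ref{theo-systems}. No gaps to report.
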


% Similarly, we obtain:

% \begin{corollary}
%     Let $m\in\N$, $d=\dim(G)$ and $P=(P_{ji}):C^\infty(G,\mathbb{C}^m)\to C^\infty(G,\mathbb{C}^m)$ be a system of left-invariant continuous linear operators such that $\sigma_P(\xi)$ is block diagonally dominant by rows and columns for all but finitely many $[\xi]\in\widehat{G}$. Also let $\tau_\ell\in\R$ be large enough so that $P_{j\ell},P_{\ell i}$ have order $\leq \tau_\ell$ for every $1\leq i,j,\ell\leq m$, $i,j\neq \ell$. Suppose also that there exists $K\in\N$ such that $d_\xi\leq K$, for every $[\xi]\in\widehat{G}$. 
%     Then the system $(P_{ji
%     })$ is globally hypoelliptic if there exist $C_\ell>0$ and $k_\ell\in\R$, $k_\ell>\tau_\ell$ such that
%     \begin{equation*}
%         \lambda_{\min}[\sigma_{P_{\ell\ell}}(\xi)]\geq C_\ell\langle\xi\rangle^{k_\ell},
%     \end{equation*}
%     for $1\leq \ell\leq m$ and for all but finitely many $[\xi]\in\widehat{G}$.
% \end{corollary}
% \begin{proof}
%     Follows exactly as in the case of Theorem \eqref{thoerem_diagonal}, but replacing the estimate $d_\xi\leq C_G\langle\xi\rangle^{\frac{d}{2}}$ by $d_\xi\leq K$.
% \end{proof}
\begin{corollary}\label{coro_toro_diag}
    Let $m,r\in\N$ and $P=(P_{ji}):C^\infty(\T^r,\mathbb{C}^m)\to C^\infty(\T^r,\mathbb{C}^m)$ be a system of Fourier multipliers such that $\sigma_P(\xi)$ is diagonally dominant by rows and columns for all but finitely many $\xi\in\Z^r$, in the sense that
  \begin{equation}\label{diag_dom}
    |P_{\ell\ell}(\xi)|>\sum_{i\neq \ell}|P_{\ell i}(\xi)|\quad\text{ and }\quad |P_{\ell\ell}(\xi)|>\sum_{j\neq \ell}|P_{j\ell}(\xi)|,
\end{equation}
for all but finitely many $\xi\in\Z^r$.
    Also let $\tau_\ell\in\R$ be large enough so that every $P_{\ell i},P_{j \ell}$, $i,j\neq \ell$ has order at most $\tau_\ell$, for $1\leq i,j,\ell\leq m$. 
    Then the system $(P_{ji})$ is globally hypoelliptic if there exist $C_\ell>0$ and $k_\ell\in\R$, $k_\ell>\tau_\ell$ such that
    \begin{equation}\label{cond_coro}
        |{P_{\ell\ell}}(\xi)|\geq C_\ell(1+\|\xi\|_2^2)^{k_\ell/2},
    \end{equation}
    for $1\leq \ell\leq m$ and all but finitely many $\xi\in\Z^r$, where $\xi\mapsto P_{ji}(\xi)\in\mathbb{C}$ denotes the symbol of $P_{ji}$, $1\leq i,j\leq m.$
\end{corollary}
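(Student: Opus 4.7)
The plan is to observe that this corollary is simply the specialization of Theorem \ref{thoerem_diagonal} to the torus $G = \T^r$, so the entire task is to verify that each hypothesis of the general theorem translates into the stated hypotheses of the corollary.

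First I would identify the unitary dual $\widehat{\T^r}$ with $\Z^r$ and note that every irreducible unitary representation of $\T^r$ is one-dimensional, so $d_\xi = 1$ for every $\xi \in \Z^r$. Consequently each symbol $\sigma_{P_{ji}}(\xi)$ is a scalar that agrees with $P_{ji}(\xi) \in \C$, and the $m\times m$ block matrix $\sigma_P(\xi)$ reduces to the ordinary $m\times m$ complex matrix with entries $P_{ji}(\xi)$. Under this identification, $\mathscr{S}^{\tau_\ell}_{0,0}(\T^r\times\Z^r)$ membership for $P_{\ell i}$ and $P_{j\ell}$ ($i,j\neq\ell$) is exactly the statement that these symbols have order at most $\tau_\ell$, which is the condition in the corollary.

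Next I would translate the block diagonal dominance hypothesis. For a $1\times 1$ matrix $a\in\C$ one has $\|a\|_{\operatorname{op}} = |a|$ and $\|a^{-1}\|_{\operatorname{op}}^{-1} = |a| = \lambda_{\min}[a]$, so the sufficient condition \eqref{block_dominant2} required of $\sigma_P(\xi)$ in Theorem \ref{thoerem_diagonal} collapses exactly to the scalar diagonal dominance \eqref{diag_dom}. Thus the dominance hypothesis of the corollary implies the dominance hypothesis of the theorem.

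Finally I would match the growth bound on the diagonal. On $\T^r$ with the standard flat Laplacian, the eigenvalue $\nu_\xi$ attached to the character $x\mapsto e^{2\pi i\xi\cdot x}$ is (up to a fixed positive constant that may be absorbed into $C_\ell$) equal to $\|\xi\|_2^2$, so $\langle\xi\rangle = (1+\|\xi\|_2^2)^{1/2}$. Since $\lambda_{\min}[\sigma_{P_{\ell\ell}}(\xi)] = |P_{\ell\ell}(\xi)|$, the bound \eqref{cond_coro} is identical to the bound $\lambda_{\min}[\sigma_{P_{\ell\ell}}(\xi)] \geq C_\ell\langle\xi\rangle^{k_\ell}$ appearing in Theorem \ref{thoerem_diagonal}. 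With all hypotheses verified, Theorem \ref{thoerem_diagonal} immediately gives the conclusion. There is no real obstacle here: the proof is pure bookkeeping, and the only mild care needed is to ensure that the operator norm, the max norm, the modulus, and the smallest singular value all coincide in the scalar case so that the block conditions reduce to the familiar diagonal dominance of complex matrices.
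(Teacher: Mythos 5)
Your proposal is correct and matches the paper's (implicit) argument exactly: the corollary is stated in the paper as a direct specialization of Theorem \ref{thoerem_diagonal} to $G=\T^r$, where $d_\xi=1$ makes all block norms, the modulus, and the smallest singular value coincide. Your bookkeeping of the diagonal-dominance condition and of $\langle\xi\rangle=(1+\|\xi\|_2^2)^{1/2}$ (up to a constant absorbed into $C_\ell$) is precisely what is needed.
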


\begin{obs}
    Recall that in the case of a $m\times m$ diagonal  system of continuous linear operators $(P_{ji})$, a sufficient (and necessary) condition for global hypoellipticity is that $P_{\ell\ell}$ is globally hypoelliptic, for $1\leq \ell\leq m$. By Theorem 3.1 in \cite{CardKow} this implies that there must exist $C>0$ and $k\in\R$ such that
    \begin{equation*}
         \lambda_{\min}[\sigma_{P_{\ell\ell}}(\xi)]\geq C\langle\xi\rangle^k,
    \end{equation*}
    for $1\leq \ell\leq m$ and all but finitely many $[\xi]\in\widehat{G}$. The results above can then be seen as a generalization of the diagonal case, where a more restrictive condition appears requiring some ``stronger" regularity of the operators on the diagonal depending on the order of the operators outside the diagonal.
\end{obs}

\begin{example}
     Consider a $m\times m$ system of left-invariant continuous linear operators $(P_{ji})$ on a compact Lie group $G$ such that
     $P_{\ell\ell}$ is elliptic of order $k_\ell\in\R$ and $P_{\ell i},P_{i\ell}$ have order less than $k_\ell$ for every $i,j\neq \ell$, $1\leq \ell\leq m$. Then  the system $(P_{ji})$ is globally hypoelliptic. Indeed, since $P_{\ell\ell}$ is elliptic of order $k_\ell\in\R$, by definition we have that 
     \begin{equation*}
         \|\sigma_{P_{\ell\ell}}(\xi)^{-1}\|_{\operatorname{op}}\leq C\langle\xi\rangle^{-k_\ell}, 
     \end{equation*}
     for all but finitely many $[\xi]\in\widehat{G}$. Consequently for all such $[\xi]$ this implies that 
     \begin{equation*}
         \|\sigma_P(\xi)\|_{\operatorname{op}}\geq \frac{1}{C}\langle \xi\rangle^{k_\ell},
     \end{equation*}
     so by the sufficient condition \eqref{block_dominant2} the matrices $\sigma_P(\xi)$ are block diagonally dominant, and 
     \begin{equation*}
         \lambda_{\min}[\sigma_P(\xi)]= \|\sigma_{P_{\ell\ell}}(\xi)^{-1}\|_{\operatorname{op}}^{-1}\geq  \frac{1}{C}\langle \xi\rangle^{k_\ell},
     \end{equation*}
     for all but finitely many $[\xi]\in\widehat{G}$. Hence the system satisfies the conditions of Theorem \ref{thoerem_diagonal}, and therefore is globally hypoelliptic  as claimed.
\end{example}

\begin{example}
     Let $m,r\in\N$ and $P=(P_{ji}):C^\infty(\T^r,\mathbb{C}^m)\to C^\infty(\T^r,\mathbb{C}^m)$ be a system of Fourier multipliers such that $\sigma_P(\xi)$ is diagonally dominant by rows and columns for all but finitely many $\xi\in\Z^r$. Also let $\tau_\ell\in\R$ be large enough so that every $P_{\ell i},P_{j \ell}$, $i,j\neq \ell$ has order at most $\tau_\ell$, for $1\leq i,j,\ell\leq m$. Following the definitions in \cite{kkfinite}, let $\tau_\ell'$ denote the essential order of $P_{\ell\ell}$ (note that $\tau_\ell'\geq \tau_\ell$ by \eqref{diag_dom}). If $\tau_\ell'>\tau_\ell$, and $P_{\ell\ell}$ is globally hypoelliptic with $r_\ell<\tau_\ell'-\tau_\ell$ loss of derivatives (as defined in  \cite{kkfinite}), for every $1\leq \ell\leq m$, then by Corollary \eqref{cond_coro} the system $(P_{ji})$ is globally hypoelliptic. 
\end{example}

% \begin{example}
%      Consider a $m\times m$ system of  Fourier multipliers $(P_{ji})$ on $\T^r$  such that $P_{\ell\ell}$ is elliptic of order $k_\ell\in\R$ and $P_{\ell i},P_{i\ell}$ have order $<k_\ell$ for every $i,j\neq \ell$, $1\leq \ell\leq m$. Then by Corollary \ref{coro_toro_diag} the system $(P_{ji})$ is globally hypoelliptic.
% \end{example}

% \begin{example}
%     Consider the $2\times 2$ system of Fourier multipliers on $\T^2$ given by
%     \begin{align*}
%         P_{11}&=\partial_{x_1}^2-p_1\partial_{x_2}^2,\\
%         P_{22}&=\partial_{x_1}^2-p_2\partial_{x_2}^2,
%     \end{align*}
%     where $p_1,p_2\in\N$ are prime numbers, and $P_{12}, P_{21}$ are given by the symbols
%     \begin{align*}
%         P_{12}(\xi)&=\lambda_1(1+|\xi_1|+|\xi_2|)^{-1},\\P_{21}(\xi)&=\lambda_2(1+|\xi_1|+|\xi_2|)^{-1},
%     \end{align*}
%     for every $(\xi_1,\xi_2)\in\Z^2$, where $\lambda_1,\lambda_2\in \C$. Then clearly $\sigma_P(\xi)$ is diagonally dominant by rows and columns for all but finitely many $\xi\in\Z^2$. Moreover, $P_{12}$ and $P_{21}$ have order $-1$, and since $\sqrt{2},\sqrt{3}$ are algebraic numbers of degree $2$, $P_{11},P_{22}$ are globally hypoelliptic with loss of $2$ derivatives by \cite[Theorem 5.1]{kkfinite}. Since $P_{11}$ and $P_{22}$ have order $2$, we conclude by Remark \ref{remark-loss} that the system $(P_{ji})_{i,j=1}^2$ is globally hypoelliptic. 
%     \end{example}

 Another class of system of operators which has been intensively studied in the last few years (see for instance \cite{BergamascoGH,systems-gelfand,overdetermined,gevrey-torus,stronglyinvariantsyst}) corresponds to the case where $n=1$. More precisely, consider a system of left-invariant continuous linear operators $P_{j1}:C^\infty(G)\to C^\infty(G)$, for $1\leq j\leq m$, and the associated system of equations:
\begin{equation}\label{system2}
    \begin{cases}
        \begin{matrix}
    P_{11}u_1=f_1\\
    \vdots\\
    P_{m1}u_1=f_m,
\end{matrix}
    \end{cases}
\end{equation}
where $f_1,\dots, f_m\in C^\infty(G)$. The system $(P_{j1})$ is said to be globally hypoelliptic if whenever the system \eqref{system2} admits solution $u_1\in\mathcal{D}'(G)$ for $f_1,\dots, f_m\in C^\infty(G)$, this implies that $u_1\in C^\infty(G)$.

To simplify the notation, in this case we denote $P_j= P_{j1}$, $1\leq j\leq m$ and $u= u_1$. 
Then system \eqref{system2} is equivalent to  the vector-valued equation
\begin{equation*}
    Pu=f,
\end{equation*}
where $P: C^\infty(G,\mathbb{C}^1)\to C^\infty(G,\mathbb{C}^m)$ is given by
\begin{equation}\label{eq_operator_P2}
    (Pu)_j=P_{j}u,\quad 1\leq j\leq m,
\end{equation} 
and we identify $P$ with the $m\times 1$ matrix of operators
\begin{equation*}
    P=\begin{pmatrix}
        P_1\\
        \vdots\\
        P_{m}
    \end{pmatrix}.
\end{equation*}
Then $\lambda_{\min}[\sigma_P(\xi)]$  can be written as 
\begin{align*}
      \lambda_{\min}[\sigma_P(\xi)]&= \min\left\{\left(\sum_{j=1}^{m}\left\|
         \sigma_{P_{j}}(\xi)v(\xi)\right\|_{2}^2\right)^{\frac{1}{2}}\bigg|\,v(\xi)\in \mathbb{C}^{d_\xi},\,\|v(\xi)\|_2=1\right\}\\
         &=\min\left\{\left\|
         [\sigma_{P_{j}}(\xi)v(\xi)]_{j=1}^m\right\|_{2} : \,v(\xi)\in \mathbb{C}^{d_\xi},\,\|v(\xi)\|_2=1\right\},
    \end{align*}
    for every $[\xi]\in\widehat{G}$, where in the last line the first $\|\cdot\|_2$ denotes the Euclidean norm in $(\mathbb{C}^{d_\xi})^m=\C^{md_\xi}$.

    Note that in the case of a block column matrix $A=(A_\ell)_{\ell=1}^m$, we have that $\lambda_{\min}[A_\ell]\leq \lambda_{\min}[A]$, for $1\leq \ell\leq m$. Hence
\begin{equation*}
    \lambda_{\min}[\sigma_P(\xi)]\geq \max_{1\leq j\leq m}\lambda_{\min}[\sigma_{P_j}(\xi))],
\end{equation*}
for every $[\xi]\in\widehat{G}$.

Therefore we conclude that if there exist $k\in\R$ and $C>0$ such that
\begin{equation*}
    \max_{1\leq j\leq m}\lambda_{\min}[\sigma_{P_{j}}(\xi)]\geq C\langle\xi\rangle^k,
\end{equation*}
for all but finitely many $[\xi]\in\widehat{G}$, then the system $(P_j)$ is globally hypoelliptic.

Finally, note that in the case where $G=\T^r$, then 
\begin{equation*}
    \lambda_{\min}[\sigma_P(\xi)]=\sqrt{|P_{1}(\xi)|^2+\dots+|P_{m}(\xi)|^2},
\end{equation*}
so 
\begin{equation*}
    \max_{1\leq \ell\leq m}|P_{\ell}(\xi)|\leq \lambda_{\min}[\sigma_P(\xi)]\leq \sqrt{m}\max_{1\leq \ell\leq m}|P_{\ell}(\xi)|,
\end{equation*}
hence the system $(P_j)$ is globally hypoelliptic if and only if there exist $k\in\R$ and $C>0$ such that
\begin{equation*}
    \max_{1\leq j\leq m}\lambda_{\min}[\sigma_{P_{j}}(\xi)]\geq C\langle\xi\rangle^k,
\end{equation*}
for all but finitely many $[\xi]\in\widehat{G}$.

 This last result can also be seen as a consequence of the results in \cite{diag-systems}, where the authors obtained necessary and sufficient conditions for global hypoellipticity for such systems on general compact Lie groups, by requiring some extra assumptions on the symbol of every $P_j$.

% Finally we present one particular case of such systems which allows for better estimates. Note first that by the definition of singular value, we also have that
% \begin{equation*}
%     m_{\xi}(P) =  \lambda_{\min}[\sigma_P(\xi)] = \sqrt{\mu_{\min}[\sigma_P(\xi)^*\sigma_P(\xi)]}=\sqrt{\mu_{\min}\left[\sum_{j=1}^m\sigma_{P_j}(\xi)^*\sigma_{P_j}(\xi)\right]},
% \end{equation*}
% where we denote by $\mu_{\min}(B)$ the smallest eigenvalue of a self-adjoint matrix $B$. Now consider the case $m=2$ and assume that for all but finitely many $[\xi]\in\widehat{G}$ the following holds:
% \begin{enumerate}
%     \item All $\sigma_{P_1}(\xi),\sigma_{P_2}(\xi)$ are real matrices;
%     \item Either $\sigma_{P_1}(\xi)$ and $\sigma_{P_2}(\xi)$ are both non-singular, or have the same range;
%     \item $\sigma_{P_1}(\xi)+\sigma_{P_2}(\xi)$ is non singular.
% \end{enumerate}
% Note that  $\sigma_{P_j}(\xi)^*\sigma_{P_j}(\xi)=\sigma_{P_j}(\xi)^\intercal\sigma_{P_j}(\xi)$ are symmetric positive semi-definite real matrices, for $j=1,2$ and all but finitely many $[\xi]\in\widehat{G}$, hence by \cite[Theorem 3.1]{?} and our assumptions, we have that
% \begin{equation*}
%     \lambda_{\min}[\sigma_{P}(\xi)]\geq \min\{\sigma_{P_1}(\xi),\sigma_{P_2}(\xi)\}.
% \end{equation*}

\bibliographystyle{plain}
\bibliography{references}

\end{document}